\theoremstyle{plain}
\newtheorem{theorem}{Theorem}[section]
\newtheorem{lemma}[theorem]{Lemma}
\newtheorem{proposition}[theorem]{Proposition}
\newtheorem{corollary}[theorem]{Corollary}
\theoremstyle{definition}
\newtheorem{example}[theorem]{Example}
\newtheorem{remark}[theorem]{Remark}
\newcommand\flowsfrom{\mathrel{\reflectbox{$\leadsto$}}}
\def\dashto{\dashrightarrow}
\def\G{\mathcal G}
\def\H{\mathcal H}
\def\X{\mathcal X}
\def\U{\mathcal U}
\def\R{\mathbb R}
\def\Z{\mathbb Z}
\def\C{C^\infty}
\def\id{{\rm id}}
\def\deck{{\rm Deck}}
\def\diff{{\rm Diff}}
\def\then{\Rightarrow}
\def\from{\leftarrow}
\def\xto{\xrightarrow}
\def\xfrom{\xleftarrow}
\def\action{\curvearrowright}
\def\toto{\rightrightarrows}
\def\tensor{\otimes}
\def\<{\langle}
\def\>{\rangle}
\def\tf{\tilde{f}}
\def\tg{\tilde{g}}
\begin{document}

\title[]
{Discrete dynamics and differentiable stacks}


\author[]
{Alejandro Cabrera \and Matias del Hoyo \and Enrique Pujals}

\address{Departamento de Matem\'atica Aplicada - IM, Universidade
Federal do Rio de Janeiro, CEP 21941-909, Rio de Janeiro, Brazil}
\email{alejandro@matematica.ufrj.br}

\address{Departamento de Geometria - IME, Universidade Federal Fluminense. Rua Professor Marcos Waldemar de Freitas Reis, s/n, 24210-201, Niter\'oi, Brazil}
\email{mldelhoyo@id.uff.br}

\address{Instituto Nacional de Matematica Pura e Aplicada, Estrada Dona Castorina 110, Rio de Janeiro, 22460-320, Brazil} 
\email{enrique@impa.br}

\date{\today}


\begin{abstract}
In this paper we relate the study of actions of discrete groups over connected manifolds to that of their orbit spaces seen as differentiable stacks. We show that the orbit stack of a discrete dynamical system on a simply connected manifold encodes the dynamics up to conjugation and inversion. We also prove a generalization of this result for arbitrary discrete groups and non-simply connected manifolds, and relate it to the covering theory of stacks. As applications, we obtain a geometric version of Rieffel's theorem on irrational rotations of the circle, we compute the stack-theoretic fundamental group of hyperbolic toral automorphisms, and we revisit the classification of lens spaces.
\end{abstract}

\keywords{}

\maketitle

\setcounter{tocdepth}{1} 
\tableofcontents





\section{Introduction}


A {\em discrete dynamical system} consists of a {\em state space} $M$ and an {\em evolution rule} $f:M\to M$ according to which points evolve by iterations $f^n:M\to M$, $n\in\mathbb Z$. The main goal of the theory is to classify $f$ up to conjugacy $f\mapsto hfh^{-1}$ by automorphisms $h$ of $M$. We will restrict ourselves to the smooth setting, so that $M$ is a manifold, $f$ is a diffeomorphism and all maps are of class $C^ \infty$.
Basic invariants of a dynamical system under conjugation and inversion are the subspaces of {\em fixed points} and {\em periodic points} ${\rm Fix}_n=\{x|f^n(x)=x\}\subset M$, and the {\em orbit space} $M/f=\{[x]|x\sim f(x)\}$. We begin this introduction discussing up to which extent the orbit space encodes the dynamics.


In very special cases, namely when $M$ is simply connected and the induced action $\Z\action_f M$ is free and proper, the orbit space is a manifold which allows us to recover the dynamics up to conjugation and inversion, by means of covering space theory. As an example, think of the translation $\tau: x \mapsto x+1$ on $M=\R$, the orbit space is the circle $S^1$ and we can reconstruct the dynamics as the natural action of $\pi_1(S^1)=\Z$ over the universal cover $\R$. This picture can be generalized, we can relax the simply connectedness of $M$ to requiring $M$ connected and $H^1(M)=0$, and arrive to the following statement:

\smallskip

{\em If $M$ is connected, $H^1(M)=0$ and $\Z\action_f M$ is free and proper, then the manifold $M/f$ recovers the dynamics $f$ up to conjugation and inversion.}

\smallskip

This statement works as follows. The free and proper action $\Z\action_f M$ corresponds to a smooth principal $\Z$-bundle $M\to M/f$ (see eg. \cite[Ex. 3.2.4]{dh}), so $M\to M/f$ is a covering space with fiber $\Z$, and we get a short exact sequence
$$1\to \pi_1(M) \to \pi_1(M/f)\xto{c_f}\Z\to 1$$
Since $H^1(M)={\rm Hom}(\pi_1(M),\Z)=0$, any epimorphism $\pi_1(M/f)\to\Z$ must vanish over $\pi_1(M)$, and $c_f$ is completely determined by $M/f$ up to sign. We can then recover $M$ as the quotient of the universal covering space \smash{$\widetilde{M/f}$} by the action of $\ker c_f\subset \pi_1(M/f)$, and also recover the action $\Z\action_f M$ as the deck transformations of $M\to M/f$.

Let us now think of more general situations. In the previous discussion, the condition $H^1(M)=0$ can be relaxed, as long as we keep track of the cohomology class $c_f\in H^1(M/f)$, as we will do later. But getting rid of the freeness and properness of the dynamics is more delicate. For non-free actions, the presence of fixed and periodic points may not be reflected on the space $M/f$, as in the case of rational rotations on the circle. For non-proper actions, the topology on the quotient $M/f$ can be quite pathological and fail to be a manifold, as in the case of irrational rotations on the circle. 


The aim of this paper is to show that the above statement has a generalization for arbitrary discrete dynamics if we think of the space of orbits not as a mere topological space but as a {\em differentiable stack}. 
Stacks are enhanced spaces with virtual symmetries attached, and differentiable stacks are those arising as quotients of manifolds. Examples include manifolds and orbifolds, but can be much more general, have non-Hausdorff topologies, or admit isotropies of positive dimension. As we shall see, this notion exactly incorporates the relevant information underlying the orbits of a dynamical system up to conjugation and inversion.


The classical definition of stacks involves sophisticated category theory and the technical paraphernalia often strays from the original geometric intuition. We follow here an alternative approach by using {\em Lie groupoids}.
A groupoid consists of a set of objects and arrows between them that can be composed satisfying the usual group-like axioms: associativity, existence of units and inverses.
A {\em Lie} groupoid is a groupoid on which both objects and arrows are differentiable manifolds, and the structural maps are smooth. They provide a unified framework to deal with various classical geometries such as group actions, foliations and principal bundles, codifying internal and external symmetries. 


Just as ordinary quotients arise from equivalence relations, differentiable stacks can be thought of as quotients by the enhanced equivalence relation defined by a Lie groupoid. Two different Lie groupoids can {\em present} the same stack, and this happens exactly when they are {\em Morita equivalent}, so that working with differentiable stacks up to isomorphism is equivalent to working with Lie groupoids up to Morita equivalences. A key ingredient in our paper is a concrete characterization of Morita equivalences in terms of {\em Morita morphisms}, introduced in \cite{dh}, and equivalent to a more standard yet abstract approach using bimodules.

\smallskip


In this paper we then relate the study of discrete dynamical systems, and more generally, actions of discrete groups over connected manifolds, to the study of the underlying orbit stack defined by the corresponding {\em action groupoids}: 
$$
\begin{matrix}\text{dynamics}\\ f:M\to M \end{matrix}\quad \to\quad
\begin{matrix}\text{actions}\\ \Z \action_f M \end{matrix}\quad \to\quad
\begin{matrix}\text{groupoids}\\ \Z \ltimes_f M\toto M\end{matrix}\quad \to\quad
\begin{matrix}\text{stacks}\\ M//f \end{matrix}
$$
 
First we show that because of our topological assumptions, every morphism between our action groupoids must decompose as a product of a group homomorphism and a smooth map. Then we review the lifting of a dynamics to the universal covering space, and derive from this construction examples of Morita morphisms. 
Our first main result is the Characterization Theorem \ref{thm1}, which essentially says that every Morita morphism between our action groupoids must arise from a covering space as before. Our second main result is the Representation Theorem \ref{thm2}, implying that any Morita equivalence between action groupoids can be determined by a groupoid map from the universal cover lift. As an immediate corollary, we have:

\smallskip

{\em If $M$ is connected and $H^1(M)=0$ then the differentiable stack $M//f$ recovers the dynamics $f$ up to conjugation and inversion.}

\smallskip

In the general case of an action $G\action M$ of a discrete group on a connected manifold, we see that the lifting to the universal covering space is a complete invariant of the stack $M//G$.
From the theoretic viewpoint, we explain how to interpret the lifted group to the universal cover as a stacky theoretic fundamental group, and comment on the covering map theory of differentiable stacks, studied for instance in \cite{n}.
We also show how to completely characterize a dynamics $f:M\to M$ from the stacky viewpoint without the cohomological assumption, by attaching to the orbit stack $M//f$  a characteristic class $c_f\in H^1(M//f)$.

We include application of our results to concrete simple examples, to illustrate the power of our results, and also to keep the paper readable for a broad audience, specially dynamicists and geometers that are not familiarized with the notion of stack. We derive a classification by Morita classes of the irrational rotations of the circle, which is a geometric version of a celebrated theorem by Rieffel \cite{r} in the context of $C^\ast$-algebras. We also classify hyperbolic toral automorphisms by computing their stacky fundamental group. Finally we revisit the classification of lens spaces, illustrating that even when the orbit space of a dynamics is a manifold, its orbit stack retains extra information.

\smallskip

\noindent{\bf Organization.}
We give a self-contained introduction to Lie groupoids in section 2, discuss the groupoids arising from actions of discrete groups on connected manifolds on section 3, and review the lifting of a dynamics to the universal covering space in section 4.
Section 5 and 6 are devoted to our two fundamental theorems, the characterization of the Morita morphisms, and the representation of fractions using the universal cover, respectively. We link our work with the general theory of coverings of stacks in section 7, providing independent proofs, and commenting on the existing bibliography. Then in section 8 we introduced the characteristic class of a dynamics. The last section 9 discusses four concrete examples: irrational rotations on the circle, hyperbolic toral automorphism, periodic dynamics and orbifolds, and lens spaces.

\smallskip

\noindent{\bf Acknowledgements.} 
We thank H. Bursztyn, P. Carrasco, R. Exel and M. Shub for several conversations during the development of this project, IMPA for providing an excelent environment for our meetings, and CNPq and FAPERJ for finantial support.

\section{Background on Lie groupoids}

We review here the basic definitions about Lie groupoids and discuss the  fundamental examples that will appear in our paper. We refer to \cite{cw,cf,dh,mm} for the proofs of the statements and a more thorough introduction to Lie groupoids.

\medskip


Given a manifold $M$, a {\bf Lie groupoid} $\G\toto M$ consists of the following data: a manifold $\G$ of arrows between 
the points of $M$, two submersions $s,t:\G\to M$ indicating the source and target of each arrow, a partial associative smooth multiplication
$m:\G\times_M\G\to\G$, \smash{$(z\xfrom h y,y\xfrom g x)\mapsto (z\xfrom{hg} x)$}, a unit map $u:M\to\G$, \smash{$x\mapsto (x\xfrom{u_x} x)$}, and an inverse map $i:\G\to\G$, \smash{$(y\xfrom g x)\mapsto (x\xfrom{g^{-1}}y)$}, satisfying the axioms $u_yg=g=gu_x$, $g^{-1}g=u_x$ and $gg^ {-1}=u_y$. We often regard $M$ as an embedded submanifold of $\G$ via $u$.


We can think of a groupoid $\G\toto M$ as an enhanced equivalence relation between points of the manifold $M$, where arrows parameterize different ways in which two given points can be equivalent. Given $x\in M$, its {\bf orbit} $O_x\subset M$ consists of all the equivalent points $y$, i.e. points for which there exists an arrow $y\xfrom g x$ on $\G$.  The orbits are immersed submanifolds and define a (possibly singular) {\bf characteristic foliation} on $M$. The underlying {\bf orbit space} $M/\G$ is given the quotient topology and it may be non-Hausdorff. If there is only one orbit then $\G$ is called {\bf transitive}. As we will see later, the language of differentiable stacks allow us to regard $M/\G$ as a richer geometric object, on which we can perform differential geometry.


Given $x\in M$, its {\bf isotropy} $\G_x=s^{-1}(x)\cap t^{-1}(x)$ is a Lie group, embedded into $\G$. If $x$ and $y$ are points on the same orbit, then the isotropies $\G_x$ and $\G_y$ turn out to be isomorphic, the isomorphism is given by conjugation by any arrow $y\xfrom g x$, and is canonical if and only if $\G_x$ is abelian. The isotropy group $\G_x$ naturally acts on the {\bf normal vector space} $\G_x\action N_xO=T_xM/T_xO_x$ via  $g[\gamma'(0)]=[(t\tilde\gamma)'(0)]$, where $\gamma$ is a curve with $\gamma(0)=x$ and $\tilde\gamma$ is a local lift along $s:\G \to M$ passing through $g$.


A {\bf morphism} of Lie groupoids $\phi:(\G\toto M)\to (\H\toto N)$ consists of a smooth map $\phi:\G\to\H$ which restricts to $\varphi:M\to N$ via the units, and preserves the groupoid structure: source, target, multiplication, unit and inverse. Such a $\phi$ induces a continuous map on the orbit spaces $\bar\varphi:M/\G\to N/\H$, and morphisms on the isotropy $\phi_x:\G_x\to\H_{\varphi(x)}$, and the normal directions $d\varphi_x:N_xO\to N_{\varphi(x)}O$. Two morphisms $\phi,\phi'$ are {\bf isomorphic} if there is a map $\theta:M\to\H$ such that $s(\theta_x)=\varphi(x)$, $t(\theta_x)=\varphi'(x)$, and such that $\phi'(g)\theta_x=\theta_y\phi(g)$ for all \smash{$y\xfrom g x$}.


\begin{example}
We can see manifolds and Lie groups as examples of Lie groupoids. To a manifold $M$ we associate the {\bf unit groupoid} $M\toto M$ with only identity arrows, it represents the trivial dynamics, the orbits have a single point and the isotropy is trivial. To a Lie group $G$ we associate the groupoid with a single object $G\toto\ast$, we see elements of $G$ as symmetries of a single formal object. Morphisms between unit groupoids are the same as maps between the manifolds, and similarly for Lie groups.
\end{example}

\begin{example}\label{ex:action}
Given $G\action M$ an action of a Lie group $G$ over a manifold $M$, the {\bf action groupoid} $G\ltimes M=(G\times M\toto M)$
has source the projection, target the action map $\rho:G\times M\to M$, and multiplication given by that of $G$.
As a particular case, given $f:M\to M$ a diffeomorphism, its iterations and inverses define an action by the integers $\Z\action_f M$, $n\cdot x=f^n(x)$. The corresponding action groupoid will be denoted by $\Z\ltimes_f M$. We will discuss morphisms of these groupoids in the next section.
\end{example}

\begin{example}\label{ex:relation}
Let $M$ be a manifold and let $R\subset M\times M$ be an equivalence relation that is a closed embedded submanifold and such that the projections $\pi_i|_R:R\to M$ are submersive. Then we can define a {\bf relation groupoid} $R\toto M$, with exactly one arrow between two points if they belong to the same class. It follows by the {\em Godement criterion} that $M/R$ is a manifold and $\pi:M\to M/R$ a surjective submersion. In this way we get a correspondence between relation groupoids and surjective submersions $M\to N$, and between morphisms and maps that descend to the quotients.
\end{example}

\begin{example}
Two particular cases of previous example that are interesting on their own. Given $M$ a manifold, the {\bf pair groupoid} $M\times M\toto M$ has exactly one arrow between any given two objects, it corresponds to the trivial submersion $M\to\ast$. If $\U=\{U_i\}$ is an open cover of $M$, the {\bf Cech groupoid} $\U=(\coprod_{ij}U_{ij}\toto\coprod_{i}U_i)$ is the relation groupoid arising from the surjection $\coprod_i U_i\to M$ given by inclusion. This is the domain of definion of the usual Cech cocycles arising from local trivialization of principal bundles -- they are the same as groupoid morphisms $\U\to G$.
\end{example}

\begin{example}\label{ex:gauge}
Given $G$ a Lie group and $\pi:P\to M$ a principal $G$-bundle, its {\bf gauge groupoid} $P\tensor^G P\toto M$ is the quotient of the pair groupoid of $P$ by the diagonal action $P\times P\curvearrowleft G$. We can think of $P\tensor^G P$ as having the fibers of $\pi$ as objects, and the $G$-equivariant maps between them as arrows. It is transitive and the isotropy identifies with $G$. This construction sets a 1-1 correspodence between transitive groupoids and principal bundles. 
\end{example}

Other important examples of Lie groupoids arise from foliation theory and pseudogroups. In these cases it is better to work with manifolds $\G$ that may not be Hausdorff, so as to include every example. Since we are not dealing with these constructions we will assume that all our manifolds are Hausdorff.

\section{Discrete dynamics as groupoids}

We focus now on the Lie groupoids arising from discrete dynamics, discuss morphisms at the level of dynamics, actions and groupoids, relate them to each other, and establish a key lemma relying on our topological assumptions.

\medskip


Given two diffeomorphisms $f:M\to M$ and $f':M'\to M'$, we define a {\bf morphism of dynamics} $\phi:f\then f'$ as a smooth map $\phi:M\to M'$ such that $ f' \circ \phi = \phi \circ f$. In this way, $f$ and $f'$ are {\bf conjugated} by $\phi$, namely $f'= \phi \circ f \circ \phi^{-1}$, if they are related by an invertible morphism. We will often set a base manifold $M$ and study different dynamics on it, so $f,f'$ are conjugated if they are so as elements of the group of diffeomorphism $\diff(M)$.


Given Lie group actions $G\action M$ and $H\action N$, we define a {\bf morphism of actions} $\phi:(G\action M)\to (H\action N)$ as a pair $(\lambda,\phi)$ where $\lambda:G\to H$ is a group morphism and $\phi:M\to N$ is a smooth map satisfying $\phi(gx)=\lambda(g)\phi(x)$ for all $g,x$. Note that if two actions are isomorphic then necessarily the groups are isomorphic and the manifolds are diffeomorphic. It is rather clear that the construction
$$\text{dynamics}\to\text{actions} \qquad (f:M\to M) \mapsto (\Z\action_fM)$$
is {\bf functorial}, in the sense that a morphism of dynamics always induces a morphism of actions. But there are more morphisms between the induced actions than between the original dynamics. In fact, since $\id$ and $-\id$ are the only automorphisms of $\Z$, two actions $\Z\action_fM$ and $\Z\action_{f'}M$ are isomorphic if and only if $f'$ is conjugated to either $f$ or its inverse $f^{-1}$.


We are concerned now with the action groupoid construction reviewed in example \ref{ex:action}: 
$$\text{actions}\to\text{groupoids} \qquad (G\action M) \mapsto (G\ltimes M\toto M)$$
The next simple examples show that, when working with $M$ disconnected or $G$ of positive dimension, the passing from actions to groupoids might involve a sensible loss of information.

\begin{example}
Two Lie groups $G,H$ that are diffeomorphic but not isomorphic, when acting on themselves by left multiplication, yield the same Lie groupoid, namely the pair groupoid of the underlying manifold. For instance, we can take $G,H$ to be non-isomorphic finite groups of the same order, or more geometrically, we can take the plane $\R^2$ with the multiplication $(x,y)\cdot_\lambda(x',y')=(x+x',y+e^{\lambda x}y')$, that is commutative for $\lambda=0$ and non-commutative for $\lambda\neq 0$.
\end{example}

However, when restricting ourselves to actions of discrete groups on connected manifolds, the following lemma holds, playing a key role in our passage from dynamics to groupoids. In a categorical language, it sates that the action groupoid construction, under our topological assumptions, is fully faithful.

\begin{lemma}\label{lemma:key}
Let $G\action M$, $H\action N$ be discrete groups acting on connected manifolds.
Any Lie groupoid map $\phi:G\ltimes M\to H\ltimes N$ has the form $\phi(g,x)=(\lambda(g),\varphi(x))$ with $\lambda:G\to H$ a group morphism and $\varphi:M\to N$ a smooth map.
\end{lemma}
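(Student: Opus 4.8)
The plan is to exploit that $G$ and $H$ are discrete (hence $0$-dimensional) and that $M$, $N$ are connected. Write $\phi: G\ltimes M \to H\ltimes N$ with object-level map $\varphi = \phi|_M : M \to N$. The arrow space of $G\ltimes M$ is the disjoint union $\coprod_{g\in G}\{g\}\times M$, and each slice $\{g\}\times M$ is connected (diffeomorphic to $M$). Since $\phi$ is smooth, in particular continuous, it must send each connected component $\{g\}\times M$ into a single connected component $\{h\}\times N$ of the arrow space of $H\ltimes N$; this defines a set map $\lambda : G \to H$ together with, for each $g$, a smooth map $\varphi_g : M \to N$ such that $\phi(g,x) = (\lambda(g), \varphi_g(x))$.

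The next step is to identify all the $\varphi_g$ with $\varphi$ and show $\lambda$ is a homomorphism, using that $\phi$ preserves the groupoid structure. First, $\phi$ commutes with the unit maps: the unit of $G\ltimes M$ at $x$ is $(e_G, x)$ and the unit of $H\ltimes N$ at $\varphi(x)$ is $(e_H, \varphi(x))$, so $\phi(e_G, x) = (e_H, \varphi(x))$, giving $\lambda(e_G) = e_H$ and $\varphi_{e_G} = \varphi$. Next, $\phi$ commutes with the source map: in $G\ltimes M$ the source of $(g,x)$ is $x$, and in $H\ltimes N$ the source of $(\lambda(g),\varphi_g(x))$ is $\varphi_g(x)$; since $\phi$ restricts to $\varphi$ on objects and $s\circ\phi = \varphi\circ s$, we get $\varphi_g(x) = \varphi(x)$ for all $g$ and $x$. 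Thus $\phi(g,x) = (\lambda(g), \varphi(x))$ already. Finally, compatibility with multiplication, $\phi\big((h, g x)\cdot(g, x)\big) = \phi(h, gx)\cdot\phi(g,x)$, reads $(\lambda(hg), \varphi(x)) = (\lambda(h), \varphi(gx))\cdot(\lambda(g),\varphi(x)) = (\lambda(h)\lambda(g), \varphi(x))$, whence $\lambda(hg) = \lambda(h)\lambda(g)$; so $\lambda$ is a group homomorphism. (Compatibility with the target and inverse maps is then automatic, or can be checked the same way.)

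The only genuinely substantive point is the first step — that a smooth (or even just continuous) groupoid morphism cannot "mix" the sheets $\{g\}\times M$ — and this is exactly where connectedness of $M$ is used: a continuous map out of a connected space lands in one connected component of the target. Discreteness of $H$ guarantees that the components of $H\ltimes N$ are precisely the sheets $\{h\}\times N$, so "landing in one component" means "$\lambda(g)$ is well defined". Everything after that is a routine unwinding of the groupoid axioms, so I would present it briskly. One should remark that connectedness of $M$ is essential (the preceding examples with disconnected $M$ show the statement fails otherwise), while connectedness of $N$ is not needed for this argument.
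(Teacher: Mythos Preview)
Your proof is correct and follows essentially the same approach as the paper's: both arguments use source preservation to identify the second component with $\varphi$, use connectedness of $M$ together with discreteness of $H$ to see that the first component $\phi_1(g,\cdot)$ is constant (defining $\lambda$), and then read off that $\lambda$ is a homomorphism from the unit and multiplication axioms. The only cosmetic difference is that you first invoke connectedness to land in a single sheet and then use source preservation to kill the auxiliary maps $\varphi_g$, whereas the paper applies source preservation first and then the connectedness argument; your closing remark that connectedness of $N$ is unnecessary is also accurate.
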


\begin{proof}
Starting with $\phi:G\ltimes M \to H\ltimes N$ a groupoid morphism,
let $\varphi:M\to N$ be the map at the level of objects, and write $\phi=(\phi_1,\phi_2):G\times M\to H\times N$ for the components. Since $\phi$ preserves the source map, which is just the second projection, we have $\phi_2(n,x)=\varphi(x)$. And since $M$ is connected, $\phi_1(g,x)=\phi_1(g,y)$ for all $x,y$, and we gain a map $\lambda:G\to H$ by defining $\lambda(g)=\phi_1(g\times M)$. Picking an arbitrary $x\in M$, since $\phi$ is a groupoid morphism, we have the equation
$$(\lambda(e_G),\varphi(x))=\phi(e_G,x)=\phi(u_x)=u_{\varphi(x)}=(e_H,\varphi(x))$$
and for every $g,h\in G$ we have
\begin{multline*}
(\lambda(gh),\varphi(x))=\phi(gh,x)=\phi((g,hx)(h,x))=\\
=\phi(g,hx)\phi(h,x)=(\lambda(g),\varphi(hx))(\lambda(h),\varphi(x))=(\lambda(g)\lambda(h),\varphi(x)).
\end{multline*}
It follows that $\lambda$ preserves the neutral element and the multiplication, and it is therefore a group morphism. 
\end{proof}


The last step in our construction will be to pass from Lie groupoids to differentiable stacks:
$$\text{groupoids}\to\text{stacks} \qquad (\G\toto M) \mapsto M//\G$$
We can think of $M//\G$ as the topological space $M/\G$ endowed with some further smooth information, that also keeps track of the isotropy groups. 
For a formal definition, we will obtain differentiable stacks from Lie groupoids by inverting a special class of groupoid maps, called Morita maps. We will discuss these notions in detail later.

\begin{remark}
There is an alternative point of view into working with possibly singular quotients such as $M/\G$ which is more analytic in nature. The idea is to consider (typically non-commutative) $C^\ast$-algebras as providing a model for smooth functions on $M/\G$ (see \cite{con,r}). The two approaches can be related at a general level via the convolution algebra construction $\G \mapsto C^\ast(\G)$, see \cite{l}. In this paper we focus on the purely geometric point of view, dealing directly with stacks as spaces instead of working with the 'non-commutative algebras of functions' underlying them.
\end{remark}

\section{Lifting to the universal cover}
\label{sec:lifting}

Before moving into Morita maps and differentiable stacks, let us overview some basic constructions from topology yielding a lift of an action of a discrete group over a manifold to its universal cover. The resulting lifted action will play a central role in our characterization of orbit stacks $M//G$ when $M$ is not simply connected.

\medskip


Given $M$ a connected manifold and given $p:\tilde M\to M$ its universal covering map, recall that a {\bf deck transformation} is a diffeomorphism $\phi:\tilde M\to\tilde M$ lifting the identity on $M$: $p\phi=p$. Deck transformations form a group $\deck(\tilde M\to M)\subset\diff(\tilde M)$ which is isomorphic to the fundamental group of $M$, this is an instance of the following more general fact. Fix $\tilde x\in\tilde M$, call $x=p(\tilde x)$, and let $\phi:\tilde M\to \tilde M$ be a diffeomorphism. We denote by $\alpha_\phi$ the homotopy class of the projected path $p_*(\tilde x\flowsfrom\phi(\tilde x))$ in $M$.

\begin{lemma}\label{lemma:deck}
The association $\phi\mapsto\alpha_\phi$ gives a 1-1 correspondence between liftings $\phi$ of a given $\varphi:M \to M$ to $\tilde M$, $p\phi = \varphi p$, and homotopy classes of paths $x\flowsfrom \varphi(x)$. When $\varphi=\id$ this is a group isomorphism between the deck transformations and the fundamental group.
\end{lemma}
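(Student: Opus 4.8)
The plan is to deduce everything from the elementary theory of covering spaces, the key inputs being the unique path- and homotopy-lifting properties together with the lifting criterion for maps out of a simply connected (hence locally path-connected) space; all of this is standard and was already flagged as ``review'' in the section heading. Throughout I fix $\tilde x\in\tilde M$ and set $x=p(\tilde x)$.

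First I would establish the bijection of the first sentence. By the lifting criterion applied to the simply connected source $\tilde M$, a lift $\phi:\tilde M\to\tilde M$ of $\varphi\circ p$ along $p$ exists, is uniquely determined by the value $\phi(\tilde x)$, and this value may be prescribed to be an arbitrary point of the fibre $p^{-1}(\varphi(x))$; it is moreover automatically smooth, since $p$ is a local diffeomorphism and $\varphi\circ p$ is smooth. Hence $\phi\mapsto\phi(\tilde x)$ identifies the set of liftings of $\varphi$ to $\tilde M$ with $p^{-1}(\varphi(x))$. On the other hand, the classical ``lift-and-project'' recipe identifies $p^{-1}(\varphi(x))$ with the set of homotopy classes rel endpoints of paths $x\flowsfrom\varphi(x)$: to $\tilde y\in p^{-1}(\varphi(x))$ one attaches the class of $p_*$ applied to the (homotopically unique, as $\tilde M$ is simply connected) path $\tilde x\flowsfrom\tilde y$ in $\tilde M$. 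Composing the two identifications and comparing with the very definition of $\alpha_\phi$, I obtain that the composite bijection is precisely $\phi\mapsto\alpha_\phi$.

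For the last sentence I would specialise to $\varphi=\id$. Here the liftings of $\id_M$ along $p$ are exactly the deck transformations: such a lift is a diffeomorphism because $p:\tilde M\to M$ is a regular covering, so that the liftings of $\id_M$ form a group acting simply transitively on each fibre. It then remains to verify that $\phi\mapsto\alpha_\phi$ is compatible with the group laws. Given $\phi,\psi\in\deck(\tilde M\to M)$, I pick paths $\tilde c_\phi$ from $\phi(\tilde x)$ to $\tilde x$ and $\tilde c_\psi$ from $\psi(\tilde x)$ to $\tilde x$; then $\phi\circ\tilde c_\psi$ runs from $(\phi\psi)(\tilde x)$ to $\phi(\tilde x)$, so its concatenation with $\tilde c_\phi$ runs from $(\phi\psi)(\tilde x)$ to $\tilde x$, and projecting by $p$ and using $p\phi=p$ exhibits $\alpha_{\phi\psi}$ as the product of $\alpha_\phi$ and $\alpha_\psi$ (in the order dictated by one's concatenation convention). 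Combined with the bijectivity already proved in the case $\varphi=\id$, this yields the asserted group isomorphism.

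I do not expect any genuine obstacle: the argument is a routine unwinding of definitions. The only points requiring care are bookkeeping ones — the reading direction of the paths $a\flowsfrom b$, which endpoint of the lifted path is selected by $\phi(\tilde x)$, and the order of composition in the homomorphism check — together with the (standard) fact that the universal covering is regular, which is what upgrades a lift of $\id_M$ from a mere smooth self-map to an actual deck diffeomorphism.
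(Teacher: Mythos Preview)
Your argument is correct and is essentially the same as the paper's: both identify the set of lifts with the fibre $p^{-1}(\varphi(x))$ and then with homotopy classes of paths $x\flowsfrom\varphi(x)$, the only difference being that the paper phrases this via the explicit path-space model $\tilde M\cong P_xM$ and writes down the inverse $\alpha\mapsto\phi_\alpha$ by concatenation, whereas you invoke the lifting criterion abstractly and check the homomorphism property by hand. The content is the same standard covering-space argument.
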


\begin{proof}
We can identify the universal cover with the homotopy classes of paths ending at $x=p(\tilde x)$,
$$\tilde M\xto\sim P_xM \qquad \tilde y \mapsto p_*(\tilde x\flowsfrom\tilde y).$$
Under this identification, $p$ agrees with taking the path's starting point.
Given \smash{$x\overset\alpha\flowsfrom \varphi(x)$}, we define $\phi_\alpha:P_xM\to P_xM$ by concatenation 
$$\phi_\alpha(x\overset\gamma\flowsfrom y)=x\overset\alpha\flowsfrom \varphi(x)\overset{\varphi(\gamma)}\flowsfrom\varphi(y),$$ which clearly covers $\varphi$. This gives the inverse of $\phi \mapsto \alpha_\phi$, and it obviously preserves the group structure for $\varphi=\id$.
\end{proof}


Given $\rho:G\action M$ a discrete group acting on a connected manifold, 
let $\tilde G\subset G\times \diff(\tilde M)$ be the subgroup of compatible
pairs $(g,\phi)$ in the sense that $p\circ \phi = \rho(g)\circ p$.
The group $\tilde G$ naturally acts on $\tilde M$ via the second projection, thus defining a {\bf lifted action} $\tilde\rho:\tilde G \action \tilde M$ and the associated {\bf lifted action groupoid} $ \tilde G \ltimes \tilde M \toto \tilde M$. 


\begin{remark}\label{rmk:group-extension}
By the lifting property of the universal cover, the projection gives a group epimorphism $\pi:\tilde G\to G$ with kernel the group of deck transformations, that we identify with $\pi_1(M)$, hence giving a {\bf canonical group extension}:
$$1 \to \pi_1(M)\to \tilde G\to G \to 1$$
It follows that $\tilde G$ is discrete for any reasonable topology on $\diff(M)$. 
If $\pi:\tilde G\to G$ admits a section, namely when we have a homomorphism lifting the $G$-transformations on $M$ to $\tilde M$, then we have a semi-direct product $\tilde G\cong G\ltimes\pi_1(M)$. 
\end{remark}

\begin{example}\label{ex:lifted-group}
If $G$ is a free group, as it is in the actions arising from a dynamics $f:M\to M$, then the canonical group extension splits.
Given $f:M\to M$ a dynamics, the choice of a particular lift $\tilde f : \tilde M \to \tilde M$ induces an isomorphism of groups 
$$\Z \ltimes_{\tf} \pi_1(M)\xto\cong \tilde G 
\qquad
(n, \phi) \mapsto (n,\tilde f^n \phi)$$
where $\phi$ is a deck transformation.
The product on $\tilde G$ is given by
$ (n,\phi)(n',\phi') = (n+n', \tilde f^{-n'}\phi \tilde f^{n'} \phi')$,
while the action on $\tilde M$ is given by
\[(n,\phi)\cdot_{\tilde f} x = \tilde f^n(\phi(x)).\]
In the case when $M=S^1$ and $f:S^1 \to S^1$, $\pi_1(M)$ identifies with the integer translations on $\tilde M=\R$, and choosing any particular lift $\tilde f$, the product and the action are
$$(m,n)(m',n')=(m+m',\epsilon^m n + n')
\qquad
(m,n)\cdot x = \tilde{f}^m(x) + \epsilon^m n
$$
with $\epsilon=\pm 1$ being a constant depending on whether $f$ preserves the orientation.
\end{example}


\begin{example}
If $\rho:G\action M$ has a fixed point $x$, namely $g\cdot x=x$ for all $g\in G$, then the canonical group extension splits. In fact, after chosing $\tilde x\in\tilde M$ over $x\in M$, we can lift each diffeomorphism $\rho_g$ to a diffeomorphism $\tilde\rho_g$ that has $\tilde x$ as a fixed point, and this lifting is unique, hence defining a section $G\to\tilde G$.
One subexample: if $G$ acts trivially on $M$ then $\tilde G\cong G\times\pi_1(M)$ is the direct product of groups and the lifted action is the product of the trivial action with the one by deck transformations. Other subexample: a toral automorphism $f_A:\R^n/\Z^n\to \R^n/\Z^n$ given by a matrix $A\in GL_n(\Z)$ has $\bar 0$ as a fixed point, in this case $\tilde G\cong \Z\ltimes_A\Z^n$.
\end{example}


\begin{example}
An example on which $\tilde G$ is not a semi-direct product. Let $M=P^3(\R)$ and let $G=\Z_2$ acting on $M$ via $\bar 1\cdot (x:y:z:w)=(-y:x:-w:z)$. Then $\tilde G\cong\Z_4$ and the canonical group extension does not split.
\end{example}


The projection $\pi:\tilde G\to G$ together with the covering projection $p:\tilde M\to M$ define a morphism of actions and of their induced groupoids:
$$(\pi,p):(\tilde G\action\tilde M)\to (G\action M), 
\qquad
(\pi,p):\tilde G\ltimes\tilde M \to G\ltimes M.$$
It is easy to check that they induce an homeomorphism between the orbit spaces $\tilde M/\tilde G\cong M/G$. In the next section we will show that this also holds at the richer level of stacks. 



\section{Morita maps between discrete dynamics}


In this section we discuss the notion of Morita morphisms, which is a morphism betwen Lie groupoids inducing an isomorphism between the orbit stacks, and present our first main theorem, a characterization result for such morphisms between our groupoids. We derive some immediate corollaries.

\bigskip


A morphism of Lie groupoids $\phi:(\G\toto M)\to(\H\toto N)$ is {\bf Morita} if it satisfies the following properties:
\begin{itemize}
 \item $\bar\phi:M/\G\to N/\H$ is a homeomorphism between the orbit spaces; 
 \item $\phi_x:\G_x\to\H_{\phi(x)}$ is a group isomorphism betwen the isotropy groups for all $x$; and
 \item $d\varphi_x:N_xO\to N_{\varphi(x)}O$ is a linear isomorphism between the normal directions for all $x$.
\end{itemize}
It easily follows from this definition that isomorphisms are Morita, that composition of Morita morphisms is Morita, and that if $\phi,\phi'$ are isomorphic Lie groupoid morphisms, then one of them is Morita if and only if the other is so. Then, if a morphism admits a quasi-inverse, namely an inverse up to isomorphism, then it also is Morita. It also follows easily that in a commutative triangle of Lie groupoid morphisms, if two of them are Morita, then so does the other, we call this the {\it 2-for-3 property}. Lastly, we remark that a Morita morphism is set-theoretic {\it fully faithful}, namely it induces bijections $\G(y,x)\to \H(\varphi(y),\varphi(x))$, and set-theoretic essentially surjective, namely the image of $\varphi$ meets every orbit of $\H$.


\begin{example}\label{ex:cech-map}
(cf. \ref{ex:relation})
A morphism between unit groupoids is Morita if and only if it is a diffeomorphism between the manifolds. If $R\toto M$ is a relation groupoid, associated to the submersion $q:M\to N$, then the projection $\pi:(R\toto M)\to (N\toto N)$ is Morita, and there is a Morita morphism on the other direction if and only if $q$ admits a global section.
\end{example}

\begin{example}
(cf. \ref{ex:gauge})
A morphism between Lie groups is Morita if and only if it is an isomorphism. If $\G\toto M$ is a transitive Lie groupoid then the inclusion
$\phi:(\G_x\toto x)\to(\G\toto M)$ is Morita for any $x\in M$. Note that this inclusion admits a quasi-inverse if and only if $\G$ is the gauge groupoid of a trivial principal $G$-bundle.
\end{example}


\begin{example}\label{ex:morita-action}
Given $G\action M$ and given $K\subset G$ a normal subgroup such that $K\action M$ is free and proper, it follows that the orbit space $M/K$ is indeed a manifold (see e.g. \cite[Cor. 2.3.4]{dh}), and that the quotient map
$$\pi:(G\ltimes M\toto M) \to (G/K\ltimes M/K\toto M/K)$$
is a Morita map, for it clearly preserves the orbit space, the isotropy groups and the normal directions. 
\end{example}

Our first main theorem characterizes the Morita morphisms between groupoids arising from discrete dynamics. It can be seen as a converse for previous example.


\begin{theorem}[Characterization Theorem]\label{thm1}
Let $G,H$ be discrete groups acting on $M,N$ connected manifolds.
Then $\phi:G\ltimes M\to H\ltimes N$ is Morita if and only if there is $K\subset G$ normal that acts on $M$ freely and properly and $\phi$ factors as $\bar\phi\circ\pi$ with $\pi$ the quotient map and $\bar\phi$ a groupoid isomorphism.
$$\xymatrix{
G \ltimes M \ar[r]^\phi \ar[d]_\pi & H\ltimes N \\ G/K\ltimes M/K \ar[ur]_{\bar\phi}
}$$
\end{theorem}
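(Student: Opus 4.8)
The plan is to prove the nontrivial implication: assuming $\phi\colon G\ltimes M\to H\ltimes N$ is Morita, produce the normal subgroup $K$ and the factorization. By Lemma \ref{lemma:key} we already know $\phi(g,x)=(\lambda(g),\varphi(x))$ for a group homomorphism $\lambda\colon G\to H$ and a smooth map $\varphi\colon M\to N$. The natural candidate is $K=\ker\lambda$, and the bulk of the proof consists in extracting geometric meaning for $K$ from the three defining conditions of a Morita morphism.

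First I would analyze the isotropy and normal-direction conditions. The isotropy of $G\ltimes M$ at $x$ is the stabilizer $G_x=\{g : gx=x\}$, and $\phi_x$ is the restriction of $\lambda$ to $G_x$, landing in $H_{\varphi(x)}$. Morita requires $\phi_x$ to be an isomorphism onto $H_{\varphi(x)}$; in particular $\lambda|_{G_x}$ is injective, so $K\cap G_x=\{e\}$ for every $x\in M$. This says precisely that $K$ acts on $M$ freely. Next I would use the normal-direction condition: since discrete groups act with zero-dimensional orbits, $N_xO=T_xM$ and $d\varphi_x$ must be a linear isomorphism $T_xM\to T_{\varphi(x)}N$; combined with essential surjectivity (the orbit-space map is onto) one gets that $\varphi$ is a local diffeomorphism and, using the orbit-space homeomorphism, that $M/K\to N$ descends from $\varphi$ to a map that is injective on orbit-representatives and a local diffeomorphism, hence that $\varphi$ is a covering onto $N$ with deck-type behaviour governed by $K$.

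The heart of the argument is showing that $K$ acts \emph{properly discontinuously} on $M$ and that $\varphi$ realizes $M/K\xrightarrow{\sim}N$ compatibly with $\lambda$. For proper discontinuity I would argue locally: fix $x$ and use that $\bar\phi\colon M/G\to N/H$ is a homeomorphism together with $d\varphi_x$ being an isomorphism to find a neighborhood $U\ni x$ such that $\varphi|_U$ is a diffeomorphism onto its image and such that for $g\in G$, if $gU\cap U\neq\varnothing$ then $\lambda(g)$ moves $\varphi(U)$ back onto itself; the full-faithfulness of $\phi$ (a consequence of the Morita conditions, noted in the text before Example \ref{ex:cech-map}) pins down exactly which such $g$ lie in $K$, and one checks only finitely many — indeed a single coset's worth — can satisfy $kU\cap U\neq\varnothing$, giving the wandering-neighborhood property for $K$. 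Once $K\backslash M$ is a manifold with $M\to M/K$ a covering, the induced groupoid map $G/K\ltimes M/K\to H\ltimes N$ is a bijective, fully faithful, local-diffeomorphism-on-objects groupoid morphism, hence an isomorphism $\bar\phi$, and $\phi=\bar\phi\circ\pi$ by construction. That $K$ is normal is immediate from $K=\ker\lambda$.

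I expect the main obstacle to be the proper-discontinuity step: translating the three pointwise/topological Morita conditions into a uniform statement about the $K$-action requires care, because a priori $K$ could be infinite and one must rule out accumulation of the orbit $Kx$, as well as rule out $K$-elements fixing points or acting non-freely near the fiber — this is exactly where connectedness of $M$ and the homeomorphism $\bar\phi$ on orbit spaces must be combined, rather than used separately. A secondary subtlety is checking that $\lambda$ descends to an \emph{isomorphism} $G/K\to H$ (not merely an injection): surjectivity of $G/K\to H$ follows from essential surjectivity of $\phi$ together with $\varphi$ being onto, and injectivity is the definition of $K$. Assembling these into the commuting triangle and invoking the 2-for-3 property to double-check that $\pi$ is Morita (via Example \ref{ex:morita-action}) closes the argument.
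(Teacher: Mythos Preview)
Your proposal is correct and follows essentially the same route as the paper: decompose $\phi=(\lambda,\varphi)$ via Lemma~\ref{lemma:key}, set $K=\ker\lambda$, use the isotropy condition to get freeness of $K\action M$, use the normal-direction condition to get that $\varphi$ is a local diffeomorphism, establish proper discontinuity of $K$, form the quotient, and then invoke 2-for-3 plus full-faithfulness to show $\bar\phi$ is an isomorphism.

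Two places where the paper is sharper than your sketch. First, the proper-discontinuity argument is simpler than you make it: once $\varphi|_U\colon U\to V$ is a diffeomorphism, you restrict immediately to $k\in K$ (not general $g\in G$): if $kU\cap U\neq\varnothing$ then some $z,kz\in U$ have $\varphi(kz)=\lambda(k)\varphi(z)=\varphi(z)$, so injectivity of $\varphi|_U$ gives $kz=z$ and freeness gives $k=e$. No appeal to full-faithfulness or to the orbit-space homeomorphism is needed here. Second, you assert $\varphi$ is onto $N$ (hence a covering) without saying why; the paper supplies this via an open--closed argument: the image is open since $\varphi$ is a local diffeomorphism, and its complement is open because any $y\notin\varphi(M)$ lies in $h\cdot\varphi(M)$ for some $h\notin\lambda(G)$ (found via essential surjectivity and full-faithfulness), and connectedness of $N$ finishes it. This is also what ultimately gives surjectivity of $\bar\lambda$.
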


\begin{proof}
Let us consider a Morita map $\phi:G\ltimes M\to H\ltimes N$ and show that it has to have the claimed form. By lemma \ref{lemma:key} the morphism is of the form $\phi=(\lambda,\varphi)$, with $\lambda:G\to H$ a group morphism and $\varphi:M\to N$ a smooth map. Since $\phi$ is Morita, and since the orbits are 0-dimensional, it follows that the normal directions agree with the tangent spaces, and $d\varphi:T_xM\to T_{\varphi(x)}N$ is an isomorphism for all $x\in M$, that is, $\varphi$ is a local diffeomorphism.

For $K=\ker\lambda$ we claim that the restricted action $K\action M$ is free and proper. To see that the action is free, note that if $g\in K$, $x\in M$ and $gx=x$, then the arrow $(g,x)\in G\ltimes M$ is in the kernel of the isotropy map $(G\ltimes M)_x\to (H\ltimes N)_{\varphi(x)}$. Since $\phi$ is Morita, this has to be an isomorphism, and therefore $g=e_G$. This shows that the action is free, or equivalently, that the map $K\times M\to M\times M$, $(g,x)\mapsto (gx,x)$, is injective. To see that the action is proper we need to show that its image, the relation defined by the action, is closed.

We will show that the $K$-orbits agree with the fibers of $\varphi:M\to N$, so the relation identifies with $(\varphi\times\varphi)^{-1}(\Delta_N)$, the preimage of the diagonal in $N$, that is closed for our manifolds are assumed to be Hausdorff. On the one hand, since $\phi$ preserves the target maps, we have $\varphi(gx)=\lambda(g)\varphi(x)$, and if $g\in K$ then $\lambda(g)=e_H$ and $x$ and $gx$ are in the same fiber, so the orbits are included in the fibers. On the other hand, if $x,x'\in M$ are such that $\varphi(x)=\varphi(x')=y$, since $\phi$ is set-theoretic fully faithful, we can lift the identity arrow $y\xfrom{(e_H,y)}y$ to an arrow $x'\xfrom{(g,x)}x$, from where $g\in K$ and $gx=x'$, so the fibers are included in the orbits too. 


We can now consider the action of $G/K$ on the manifold $M/K$, and as seen in Example \ref{ex:morita-action}, the quotient map $\pi:G\ltimes M\to G/K\ltimes M/K$ is Morita. The map $\varphi$ factors through this quotient as $\bar\varphi\pi$, and the groupoid morphism $\phi$ also factors as $\bar\phi\pi$. Moreover, by 2-for-3, we get that $\bar\phi=(\bar\lambda,\bar\varphi)$ is Morita. We will prove that it is in fact an isomorphism, namely that $\bar\lambda$ is a group isomorphism and $\bar\varphi$ is a diffeomorphism.

A similar argument to the one given above implies that $\bar\varphi$ is a local diffeomorphism, so it suffices to show that it is bijective.
Moreover, since the $\varphi$-fibers agree with the $K$-orbits, the induced map $\bar\varphi$ is injective. To see that $\bar\varphi$ is surjective, or equivalently that $\varphi$ is surjective, it is enough to show that the image is open and closed, for $N$ is connected. The image is open because $\varphi$ is a local diffeomorphism. 

Now suppose $y\in N\setminus\varphi(M)$, there has to exist $x$ such that $\varphi(x)$ is in the same orbit since $\phi$ yields a bijection between the orbits. Then there is an arrow $(h,\varphi(x)):\varphi(x)\to y$, and $h\notin\lambda(G)$, for if $h=\lambda(g)$ for some $g$ then $y=h\varphi(x)=\lambda(g)\varphi(x)=\varphi(gx)$, which is an absurd. 
Similarly, if $h\varphi(M)\cap\varphi(M)$ is non-empty, then there is an arrow \smash{$z'\xfrom{(h,z)}z$} on $H\ltimes N$ with source and target in $\varphi(M)$, and by set-theoretic fully faithfulness, it follows that $(h,z)$ is in the image of $\phi$ and $h=\lambda(g)$, which is an absurd. We conclude that $h\varphi(M)$ is an open containing $y=h\varphi(x)$ not touching the image $\varphi(M)$, so $N\setminus\varphi(M)$ is open as well. 

Finally, to see that $\bar\lambda:G/K\to H$ is an isomorphism, note that it is injective, and that a Morita morphism that is surjective on objects is surjective on arrows as well, as it follows from the set-theoretic fully faithfulness. Then $\bar\lambda$ has to be surjective, and therefore an isomorphism.
\end{proof}


We immediately conclude the following corollaries.

\begin{corollary}
If $N$ is simply connected, then $\phi:G\ltimes M\to H\ltimes N$ is Morita if and only if it is an isomorphism.
\end{corollary}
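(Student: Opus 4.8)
The plan is to read this off directly from the Characterization Theorem \ref{thm1}. One implication is immediate: every isomorphism is Morita, as observed just after the definition of Morita morphism. So I would start with a Morita morphism $\phi:G\ltimes M\to H\ltimes N$ and invoke Theorem \ref{thm1} to factor it as $\phi=\bar\phi\circ\pi$, where $K=\ker\lambda\subset G$ is a normal subgroup acting properly discontinuously on $M$, $\pi:G\ltimes M\to G/K\ltimes M/K$ is the quotient map, and $\bar\phi$ is a groupoid isomorphism; in particular, as established in the proof of that theorem, $K$ acts \emph{freely} on $M$, and the object map $\bar\varphi:M/K\to N$ is a diffeomorphism.

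The heart of the argument is then to show that $K$ is trivial. Since $K$ acts freely and properly discontinuously on the connected manifold $M$, the quotient projection $M\to M/K$ is a covering map with deck group $K$. But $M/K$ is diffeomorphic to $N$, hence simply connected, and a connected covering space of a simply connected manifold is trivial, by uniqueness of the universal cover. Therefore $M\to M/K$ is a diffeomorphism and $K=\{e\}$.

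Once $K=\{e\}$ we have $G/K=G$ and $M/K=M$, so $\pi$ is the identity and $\phi=\bar\phi$ is an isomorphism, as claimed. I do not expect any genuine difficulty here; the only step worth a word of justification is the implication ``$K$ properly discontinuous, $M$ connected, $M/K$ simply connected $\Rightarrow K$ trivial'', which is the classical uniqueness statement for covering spaces applied to the manifold $M$.
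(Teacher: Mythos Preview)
Your proof is correct and follows essentially the same route as the paper: both invoke the Characterization Theorem to see that the object map $\varphi:M\to N$ (equivalently $M\to M/K\cong N$) is a connected covering, and then use simple connectedness of $N$ to conclude it is a diffeomorphism, forcing $K=\{e\}$. Your version spells out explicitly the intermediate step through $M/K$ and the triviality of $K$, which the paper compresses into a single line.
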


\begin{proof}
It follows from our characterization theorem that $M\to N$ is a connected covering map and hence it must be a diffeomorphism.
\end{proof}

\begin{corollary}\label{cor:h1}
If $H^1(N)=0$, then a map $\phi:\Z\ltimes_f M\to \Z\ltimes_g N$ is Morita if and only if it is an isomorphism. 
\end{corollary}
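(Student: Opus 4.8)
The plan is to read this off directly from the Characterization Theorem~\ref{thm1}. Since isomorphisms are automatically Morita, only the forward implication needs an argument, so I would start by assuming $\phi:\Z\ltimes_f M\to\Z\ltimes_g N$ is Morita. Applying Theorem~\ref{thm1} with $G=H=\Z$ (so the relevant subgroup is automatically normal) produces a subgroup $K\subset\Z$ acting properly discontinuously on $M$ together with a factorization $\phi=\bar\phi\circ\pi$, where $\pi:\Z\ltimes_f M\to\Z/K\ltimes M/K$ is the quotient map and $\bar\phi$ is a groupoid isomorphism. The whole problem then reduces to showing $K=\{0\}$: once we know this, $\pi$ is the identity and $\phi=\bar\phi$ is an isomorphism.

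To show $K=\{0\}$ I would invoke the topological hypothesis. Every nonzero subgroup of $\Z$ is infinite cyclic, so if $K\neq\{0\}$ then $K\cong\Z$; the quotient map $M\to M/K$ is then a regular covering with deck group $K\cong\Z$, which yields a surjection $\pi_1(M/K)\twoheadrightarrow\Z$. But $\bar\phi$ being an isomorphism forces its object map $\bar\varphi:M/K\to N$ to be a diffeomorphism, so we would obtain a surjection $\pi_1(N)\twoheadrightarrow\Z$, contradicting $H^1(N)=0$ (using $H^1(N;\Z)\cong\operatorname{Hom}(\pi_1(N),\Z)$ from the universal coefficient theorem). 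Hence $K=\{0\}$.

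Alternatively---and this reveals that the hypothesis $H^1(N)=0$ is slightly more than one needs---covering theory can be bypassed entirely: $\bar\phi:\Z/K\ltimes M/K\to\Z\ltimes_g N$ is an isomorphism between action groupoids of discrete groups over connected manifolds ($M/K$ is a manifold because $K$ acts properly discontinuously, and connected as a quotient of the connected $M$), so Lemma~\ref{lemma:key} applied to both $\bar\phi$ and $\bar\phi^{-1}$ shows the underlying group homomorphism $\bar\lambda:\Z/K\to\Z$ is a group isomorphism; since no proper quotient of $\Z$ is isomorphic to $\Z$, again $K=\{0\}$. I do not expect a genuine obstacle here---the statement is truly a corollary of Theorem~\ref{thm1}---beyond the two routine checks that subgroups of $\Z$ are trivial or infinite cyclic and that Lemma~\ref{lemma:key} legitimately applies to the factor $\bar\phi$.
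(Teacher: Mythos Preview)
Your first approach is essentially the paper's own proof: the paper also invokes Theorem~\ref{thm1}, notes that a nontrivial $K\subset\Z$ must be infinite cyclic, identifies $K$ with the deck group of the covering $M\to N$ (which is your $M\to M/K$ composed with the diffeomorphism $\bar\varphi$), and derives the contradiction via $H^1(N)\cong\hom(\pi_1(N),\Z)\neq 0$.

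Your alternative is a genuinely different and sharper route that the paper does not take. You are correct: Theorem~\ref{thm1} already asserts that $\bar\phi$ is a groupoid \emph{isomorphism}, and Lemma~\ref{lemma:key} applied to $\bar\phi$ and $\bar\phi^{-1}$ (or simply the last paragraph of the proof of Theorem~\ref{thm1}) forces the group component $\bar\lambda:\Z/K\to\Z$ to be a group isomorphism; since no proper quotient of $\Z$ is isomorphic to $\Z$, this gives $K=0$ with no cohomological hypothesis on $N$ whatsoever. So the corollary in fact holds for arbitrary connected $N$. The paper's covering-space argument is closer in spirit to the later corollary following Corollary~\ref{cor:1}, where an $H^1=0$ assumption genuinely does work (it kills a homomorphism $\pi_1(M)\to\Z$ appearing inside the lifted group); here, as you observed, the hypothesis is decorative.
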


\begin{proof}
We use the notations of Theorem \ref{thm1} and set $\phi = (\lambda, \varphi)$. If $K\subset\Z$ is non-trivial then it has to be infinite cyclic. Now, since $K$ is the group of deck transformations of $M\to N$, there is an epimorphism $\pi_1(N)\to K\cong\Z$,  implying that $H^1(N)\cong\hom(\pi_1(N),\Z)\neq 0$, contradicting our hypothesis. We conclude then that $K=0$ and $\lambda=\pm \id$.
\end{proof}

We close this section with an ad hoc example showing the necessity of the connectedness assumption in the Characterization Theorem \ref{thm1}.

\begin{example}
Let $M=\Z\times\Z$ and let $f:M\to M$, $f(x,y)=(x,y+1)$. The action groupoid $\Z\ltimes_f M$ has no isotropy and the orbits are the columns $\{x\}\times\Z$. Then $\phi:\Z\ltimes_fM\to \Z\ltimes_f M$ given by $\phi(x,y)=(x,0)$ and $\lambda=0$ is a Morita morphism that does not fit our characterization.
\end{example}


\section{The orbit stack of a dynamics}


We review here how to describe maps of stacks as fractions of maps of Lie groupoids, provide some examples, and present our second main theorem. We also derive several corollaries characterizing the orbit stacks of discrete actions and dynamics on connected manifolds. For a thorough discussion on Morita maps, fractions and differentiable stacks we refer to \cite{dh}.

\bigskip


The category of differentiable stacks can be obtained from that of Lie groupoids by localization, namely, formally inverting the Morita maps. The idea is that if $\G \to \H$ is a Morita map, since the groupoids then represent isomorphic stacks, one should be able to find an inverse going in the converse direction $\H \dashto \G$ although it might not be possible to do so with an ordinary Lie groupoid map (e.g. example \ref{ex:2vs3rot} below). Since Morita maps satisfy good saturation properties, every map in the localization can be written as a single fraction.
Given $\G,\H$ Lie groupoids, a {\bf fraction} $\beta/\alpha:\G\dashto\H$ is determined by a Morita morphisms $\alpha:\tilde\G\to\G$ and any other Lie groupoid map $\beta:\tilde\G\to\H$,
$$\G \from \tilde\G \to \H.$$
Two fractions $\beta/\alpha,\beta'/\alpha':\G\dashto\H$ are {\bf equivalent} if there are Morita maps $\gamma,\gamma'$ and isomorphisms $\alpha\gamma\cong\alpha'\gamma'$, $\beta\gamma\cong\beta'\gamma'$.
We then set as a working definition that the category of differentiable stacks and morphisms of stacks is that of Lie groupoids and classes of fractions.
We write $M//\G$ to refer to the {\bf orbit stack} of a Lie groupoid $\G\toto M$.

\begin{remark}\label{rmk:saturated}
It is an instructive exercise to check that a fraction $\beta/\alpha$ is invertible if and only if $\beta$ is also Morita, or in other words, a groupoid morphism is Morita if and only if it induces an isomorphism between the underlying stacks.
\end{remark}

\begin{remark}
Although in this paper we focus on discrete action groupoids, there can be quite different Lie groupoids presenting the same stack. An example arises by considering the {\em suspension} $\R \action \Sigma_{M,f}$ of a discrete dynamics $f\action M$. The associated action groupoid $\R \ltimes  \Sigma_{M,f}$ has 1-dimensional source fibers (instead of the discrete source fibers of $\Z \ltimes M$), but the transverse geometry is the same so that these groupoids are Morita equivalent and they thus define the same orbit stack $M//f \simeq \Sigma_{M,f}// \R$. This flexibility in the description of the orbit stack can be useful in the study of the underlying dynamics; this will be explored elsewhere.
\end{remark}


\begin{example}
Given $M$ a manifold, $G$ a Lie group and $P\to M$ a principal $G$-bundle, we can see it as inducing a fraction $\beta/\alpha:M\dashto G$ where $\alpha:G\ltimes P\to M$ and $\beta:G\ltimes P\to G$ are the projections. This fraction is not invertible unless $G$ is trivial. 
\end{example}

\begin{example}\label{ex:2vs3rot}
Let $f,g:S^1\to S^1$ denote the rotations of period 2 and 3, respectively, defining actions of $\Z_2$ and $\Z_3$ on  the circle. They give rise to Lie groupoids having isomorphic stacks, for there is an obvious fraction of Morita maps $\Z_2\ltimes_f S^1 \from \Z\ltimes\R \to \Z_3\ltimes_g S^1$. In fact the orbit stack of the three groupoids is the manifold $S^1$. Note however that there is not a Morita map relating $\Z_2\ltimes_f S^1$ and $\Z_3\ltimes_g S^1$, as it follows from the Characterization Theorem \ref{thm2}.
\end{example}

The same morphism of stacks $M//\G\to N//\H$ can be represented by several different fractions, and it is natural to seek for special representatives, as we  will do now. Given $\G\toto M$ a Lie groupoid and $\U=\{ U_i \}$ an open cover of $M$, we have a new groupoid 
$$\G_\U=\left(\coprod_{j,i} \G(U_j,U_i)\toto\coprod_{i}U_i\right)$$
where $(y,j)\xfrom{(g,j,i)}(x,i)$ if $x\in U_i$, $y\in U_j$ and $y\xfrom g x$ is an arrow in $\G$. The composition is set by $(h,k,j)(g,j,i)=(hg,k,i)$. The obvious projection $\pi_\U:\G_\U\to\G$ is Morita, and it can be proven (cf. \cite[Prop. 4.5.4]{dh}) that for every fraction $\G\dashto\H$ there is an open cover $\U$ of $M$ and a morphism $\beta$ so that $\beta/\pi_\U$ is equivalent to the original fraction. 

\begin{example}
Coming back to the example of a $G$-principal bundle $P\to M$, viewed as a fraction $\alpha:M\dashto G$, a presentation of it in the form
$M\from \U \to G$
is the same as giving a cocycle $\{\beta_{ji}:U_{ji}\to G\}$ describing $P$.
\end{example}


We present now our second main result, saying that within our framework, every fraction admits a very special representative involving the lifting to the universal covering space.

\begin{theorem}[Representation Theorem]\label{thm2}
Let $G\action M,H\action N$ be actions of discrete groups over connected manifolds. Then every map of stacks $M//G\to N//H$ can be realized by a fraction
$$ (G\ltimes M) \xfrom\pi (\tilde G \ltimes \tilde M) \rightarrow (H\ltimes N),$$
where $\tilde G\action \tilde M$ is the lifted action to the universal cover of $M$ and $\pi$ the quotient map.
\end{theorem}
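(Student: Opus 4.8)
The plan is to start from an arbitrary fraction $(G\ltimes M)\xleftarrow{\alpha}\tilde\G\xrightarrow{\beta}(H\ltimes N)$ representing the given morphism of stacks, and massage it until the left leg becomes the canonical projection $\pi:\tilde G\ltimes\tilde M\to G\ltimes M$ from the universal cover lift. The key observation is that, by the discussion preceding the theorem, we may first replace $\tilde\G$ by a \v{C}ech-type refinement $(G\ltimes M)_\U$ associated to an open cover $\U=\{U_i\}$ of $M$; refining $\U$ further we may assume each $U_i$ is evenly covered by $p:\tilde M\to M$ and simply connected. Thus, without loss of generality the fraction has the form $(G\ltimes M)\xleftarrow{\pi_\U}(G\ltimes M)_\U\xrightarrow{\beta}(H\ltimes N)$.

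The heart of the argument is then to compare $(G\ltimes M)_\U$ with $\tilde G\ltimes\tilde M$ by producing a Morita map $\tilde G\ltimes\tilde M\to (G\ltimes M)_\U$ over $G\ltimes M$, so that by the 2-for-3 property the composite fraction through $\tilde G\ltimes\tilde M$ is equivalent to the original one. Concretely, since each $U_i$ is evenly covered, $p^{-1}(U_i)=\coprod_{\sigma} \tilde U_i^\sigma$ with each sheet $\tilde U_i^\sigma$ diffeomorphic to $U_i$; these sheets form an open cover $\tilde\U$ of $\tilde M$, and I would check that the natural groupoid map $\tilde G\ltimes\tilde M\to (G\ltimes M)_\U$ — sending a point in a sheet to its image index and an arrow $(\tilde g,\tilde x)$ to the arrow in $\G(U_j,U_i)$ determined by the sheets containing source and target — is well defined and Morita (it is a local diffeomorphism on objects, fully faithful on arrows by Lemma \ref{lemma:key} together with the description of $\tilde G$ in Remark \ref{rmk:group-extension}, and an orbit-space homeomorphism since $\tilde M/\tilde G\cong M/G$). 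Precomposing $\pi_\U$ and $\beta$ with this map, and using that $\tilde G\ltimes\tilde M\to G\ltimes M$ is already known to be Morita, yields an equivalent fraction whose left leg is exactly $\pi$, as desired.

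The step I expect to be the main obstacle is verifying that the comparison map $\tilde G\ltimes\tilde M\to (G\ltimes M)_\U$ is genuinely a \emph{groupoid} morphism and Morita, rather than merely a map that looks right on objects: one must track how an arrow $(\tilde g,\tilde x)$ with $\tilde x\in\tilde U_i^\sigma$ and $\tilde g\cdot\tilde x\in\tilde U_j^\tau$ interacts with composition, and check that the resulting assignment is independent of the auxiliary choices and respects units and inverses. This is bookkeeping about how the extension $1\to\pi_1(M)\to\tilde G\to G\to1$ permutes the sheets over each $U_i$, and the even-covering and simple-connectedness hypotheses on the $U_i$ are exactly what make the local picture rigid enough for the assignment to be canonical. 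Once that is in place, fully faithfulness and the orbit-space statement follow formally, and 2-for-3 closes the argument.
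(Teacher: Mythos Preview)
There is a genuine gap: the groupoid morphism $\tilde G\ltimes\tilde M\to (G\ltimes M)_\U$ that you propose cannot exist unless the cover is trivial. On objects it would have to be a smooth map $\tilde M\to\coprod_i U_i$, and since $\tilde M$ is connected its image must lie in a single component $U_{i_0}$; composing with $\pi_\U$ would then force $p(\tilde M)\subset U_{i_0}$, i.e.\ $U_{i_0}=M$. So the ``natural'' map you describe is not well defined as a map of manifolds, and the whole precomposition step collapses. A possible repair is to insert the \v Cech groupoid on the $\tilde M$ side, i.e.\ use the genuine Morita map $(\tilde G\ltimes\tilde M)_{\tilde\U}\to(G\ltimes M)_\U$; but then your fraction has middle term $(\tilde G\ltimes\tilde M)_{\tilde\U}$, and you still owe an argument that the composite $(\tilde G\ltimes\tilde M)_{\tilde\U}\to H\ltimes N$ descends, up to isomorphism, to a map out of $\tilde G\ltimes\tilde M$.

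That descent step is exactly the content the paper supplies and that your outline is missing. The paper first observes that it suffices to treat the case $M$ simply connected (since $\pi$ is Morita, one only needs to realize the stack map $\tilde M//\tilde G\to N//H$ by an honest groupoid map). In that case it represents the fraction as $\beta/\pi_\U$ with $U_i$ simply connected and $U_{ij}$ connected, reads off from $\beta$ an $H$-valued \v Cech $1$-cocycle on $M$, and uses $\check H^1(M,\U;H)=0$ (equivalently: every principal $H$-bundle over the simply connected $M$ is trivial) to write it as a coboundary $d(j)d(i)^{-1}$. Twisting $\beta$ by the $d(i)$ produces an isomorphic morphism $\beta^d$ that is constant on the kernel of $\pi_\U$ and therefore descends to the desired $\tilde\beta:G\ltimes M\to H\ltimes N$. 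Your bookkeeping about sheets and the extension $1\to\pi_1(M)\to\tilde G\to G\to1$ is not a substitute for this cohomological trivialization; without it you cannot pass from the \v Cech model back to an action groupoid.
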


\begin{proof}
As we have discussed before, the projection map $(\tilde G \ltimes \tilde M) \to (G\ltimes M)$ is a Morita map. We will assume now that $M$ is simply connected and show that every map of stacks $M//G \to N//H$ can be realized by a groupoid map $(G\ltimes M) \to (H\ltimes N)$.
Let us then fix such a map of stacks. We know by the general theory that we can represent it by a fraction $\beta/\pi_\U$ for $\U$ an open cover of $M$,
$$(G\ltimes M) \xfrom{\pi_\U} (G\ltimes M)_{\U} \xto\beta (H\ltimes N).$$
We can assume $U_i$ simply connected for all $i$, and $U_{ji}$ connected for all $j,i$, just by replacing $\U$ with a suitable refinement.
We claim that the following diagram can be completed with an arrow $\tilde\beta$ making the triangle commutative up to isomorphism.
$$\xymatrix{
M_{\U} \ar[r]^\iota & (G\ltimes M)_{\U} \ar[r]^\beta \ar[d]_{\pi_\U} & H\ltimes N \ar[r]^\pi & H\\
& G\ltimes M \ar@{-->}[ur]_{\tilde\beta}
}$$
The upper composition $\pi\beta\iota$ is an $H$-cocycle $c(j,i)$ of the manifold $M$ defined over the cover $\U$. But since $\check H^1(M,\U)=H^1(M)=0$, it has to be a Cech coboundary. Here we are either using non-abelian cohomology, or in a simplified language, the fact that every principal $H$-bundle is a covering space and hence has to be trivial.
One way or the other, we gain a family $d(i)\in H$ such that $c(j,i)=d(j)d(i)^{-1}$.

We can now twist $\beta$ by the $d$ so as to construct a new groupoid map 
$\beta^d: (G\ltimes M)_{\U}\to H\ltimes N$
that on objects and arrows is given by
$$(i,x)\mapsto d(i)\beta(i,x)
\qquad
(j,i,g,x)\mapsto (d(j)\beta(j,i,g,x)d(i)^{-1},d(i)\beta(i,x)).$$
The map $\beta^d$ is isomorphic to $\beta$ and vanishes over the kernel of $\pi_\U$, hence it descends to a map $\tilde\beta$, and the result follows.
\end{proof}

%
%


%

In general, there are many groupoids representing a given stack. Combining our results, we conclude that within our framework there is a distinguished Lie groupoid on each Morita class, namely that on which the manifold of objects is simply connected.


\begin{corollary}\label{cor:1}
Given $G\action M$ and $H\action N$ be actions of discrete groups over connected manifolds, the orbit stacks $M//G$ and $N//H$ are isomorphic if and only if there is an isomorphism $\lambda:\tilde{G} \to \tilde{H}$ between the lifted groups and a $\lambda$-equivariant diffeomorphism $\varphi:\tilde M\to\tilde N$ between the universal covers.
\end{corollary}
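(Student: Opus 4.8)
The plan is to derive the statement by combining the two main theorems with the observation of Remark \ref{rmk:saturated} that a fraction is invertible precisely when both of its legs are Morita.

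For the \emph{if} direction, suppose an isomorphism $\lambda:\tilde G\to\tilde H$ and a $\lambda$-equivariant diffeomorphism $\varphi:\tilde M\to\tilde N$ are given. Then $(\lambda,\varphi)$ is an isomorphism of action groupoids $\tilde G\ltimes\tilde M\to\tilde H\ltimes\tilde N$, and I would combine it with the Morita projections $\pi:\tilde G\ltimes\tilde M\to G\ltimes M$ and $\pi':\tilde H\ltimes\tilde N\to H\ltimes N$ (cf. Example \ref{ex:morita-action}) to form the fraction
$$(G\ltimes M)\xfrom{\pi}(\tilde G\ltimes\tilde M)\xto{\ \pi'\circ(\lambda,\varphi)\ }(H\ltimes N).$$
Its right leg is a composite of an isomorphism and a Morita map, hence Morita, so by Remark \ref{rmk:saturated} the fraction is invertible and $M//G\cong N//H$. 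This direction is purely formal.

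For the \emph{only if} direction, I would start from an isomorphism of stacks $M//G\to N//H$ and use the Representation Theorem \ref{thm2} to realize it by a fraction $(G\ltimes M)\xfrom{\pi}(\tilde G\ltimes\tilde M)\xto{\beta}(H\ltimes N)$; invertibility of the fraction forces $\beta$ to be Morita (Remark \ref{rmk:saturated}). Since $\tilde M$ is connected and $\tilde G$ is discrete (Remark \ref{rmk:group-extension}), the Characterization Theorem \ref{thm1} applies and writes $\beta=\bar\beta\circ q$, with $q$ the quotient by a normal subgroup $K\subset\tilde G$ acting freely and properly discontinuously on $\tilde M$ and $\bar\beta:\tilde G/K\ltimes\tilde M/K\to H\ltimes N$ an isomorphism of groupoids. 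As $\tilde M$ is simply connected, $\tilde M\to\tilde M/K$ is the universal covering, $K$ is its deck group, and $\bar\beta$ identifies $\tilde M/K\cong N$ and $\tilde G/K\cong H$ compatibly with the actions. I would then transport the universal covering along $\bar\beta$ to obtain a diffeomorphism $\varphi:\tilde M\to\tilde N$ and, using the unique-lifting description of Lemma \ref{lemma:deck}, check that conjugation by $\varphi$ sends each $g\in\tilde G$ to a lift of the $H$-transformation it induces on $N$; this yields a homomorphism $\lambda:\tilde G\to\tilde H$ for which $\varphi$ is equivariant by construction. Finally I would verify that $\lambda$ is bijective: injectivity from freeness of the deck action, surjectivity from $\tilde G/K\twoheadrightarrow H$ together with the fact that any two lifts of a fixed $H$-transformation on $N$ differ by an element of $K$.

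The main obstacle lies in this last part of the \emph{only if} direction: once the Characterization Theorem has reduced $\beta$ to a quotient followed by an isomorphism, one must recognize the resulting extension $1\to K\to\tilde G\to H\to 1$ as a copy of the canonical extension $1\to\pi_1(N)\to\tilde H\to H\to 1$ defining the lifted group $\tilde H$, and arrange all the identifications so that $\varphi:\tilde M\to\tilde N$ is genuinely $\lambda$-equivariant rather than equivariant only up to a deck transformation. This is elementary covering-space bookkeeping resting on Lemma \ref{lemma:deck}, but it is the step where care is required.
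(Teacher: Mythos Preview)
Your proposal is correct and follows essentially the same route as the paper: apply the Representation Theorem \ref{thm2} to realize the stack isomorphism as a fraction through $\tilde G\ltimes\tilde M$, invoke Remark \ref{rmk:saturated} to see that $\beta$ is Morita, and then use the Characterization Theorem \ref{thm1} to recognize $\beta$ as the lift of $H\action N$ to $\tilde N$. The paper's proof is terser---it omits the ``if'' direction entirely and compresses your final covering-space bookkeeping into the phrase ``the result now follows''---but the substance is the same.
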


\begin{proof}
By Theorem \ref{thm2} we can represent the stack isomorphism $M//G\to N//H$ by a fraction $(G\ltimes M)\from (\tilde G\ltimes\tilde M)\xto\beta (H\ltimes N)$, and according to remark \ref{rmk:saturated}, the map $\beta$ has to be Morita as well. Then, by Theorem \ref{thm1} $\beta$ has to be, up to isomorphism, the lifting of $H\action N$ to the universal cover $\tilde N$. The result now follows.
\end{proof}

In the spirit of corollary \ref{cor:h1}, when working with stacks arising from dynamics $f,g:M \to M$, we can relax the notion of simply connectedness and still have a complete characterization of the dynamics by the stack.

\begin{corollary}
Let $f:M\to M$ and $g:N\to N$ be dynamics over connected manifolds with  $H^1(M)=H^1(N)=0$. The stacks $M//f$ and $N//g$ are isomorphic if and only if there is a diffeomorphism $\varphi:M\to N$ conjugating $f$ to $g$ or to $g^{- 1}$.
\end{corollary}

\begin{proof}
By previous corollary, and since in this case the lifted groups split (cf. example \ref{ex:lifted-group}), there must exists an isomorphism $\lambda:\Z\ltimes_{\tilde f}\pi_1(M)\to\Z\ltimes_{\tilde g}\pi_1(N)$ and an equivariant diffeomorphism $\varphi:\tilde M\cong\tilde N$, where $\tilde f$ and $\tilde g$ are liftings to the universal covers. Writting
$$ \lambda(n,\gamma) = (\lambda_1(n,\gamma), \lambda_2(n,\gamma)) \in \Z\times \pi_1(N), \ \ n\in \Z, \ \gamma \in \pi_1(M),$$
the map $\gamma \mapsto \lambda_1(0,\gamma)\in \Z$ defines an element in $H^1(M)$, which therefore must be trivial. It follows that $\lambda$ maps $ 0 \times \pi_1(M)$ isomorphically to $0 \times \pi_1(N)$ and that $\Z \ni n \mapsto \lambda_1(n,0)\in \Z$ must be an isomorphism, so that $\lambda_1(n,0)=\pm n$. With this, the compatibility between $\varphi$ and $\lambda$ directly implies that $\varphi$ descends to a a diffeomorphism $\bar\varphi:M=\tilde M/\pi_1(M)\to N=\tilde N/\pi_1(N)$ which conjugates $f$ to $g^{\pm 1}$.
\end{proof}

Thus, when $H^1(M)=0$, the geometry of the quotient stack retains faithfully the information about the dynamics modulo conjugation and inversion. For general $M$ non-trivial identifications may take place. Assume $M=N$ for simplicity. 
Picking lifts $\tf,\tg: \tilde M \to \tilde M$ of $f$ and $g$ to the universal cover, a stack isomorphism will be encoded in a pair $\lambda,\varphi$ with
$$ \lambda : \Z\ltimes_{\tilde f} \pi_1(M) \to \Z \ltimes_{\tilde g} \pi_1(M)$$
a group isomorphism satisfying (recall the notation of example \ref{ex:lifted-group}) $$\varphi((n,\gamma)\cdot_{\tf} x) = \lambda(n,\gamma)\cdot_{\tg} \varphi(x)$$
for all $(n,\gamma)\in \Z\times \pi_1(M)$ and $x \in \tilde M$. 
In particular, a necessary condition for $M//f\simeq M//g$ is that there must exist $k\in \Z$ and $\gamma \in \pi_1(M)$ so that
$ \varphi(\tilde f(x)) = (\tilde g^{k} \gamma)(\varphi(x))$ for all $x\in\tilde M$, generalizing the conjugation condition $\varphi f = g^{\pm 1} \varphi$ of the simply connected case. We shall explore this in detail in the case of dynamics on the circle $M=S^1$ in the last section.

\begin{remark}
We sketch here a way to faithfully encode dynamics on manifolds that are not simply connected by orbit stacks defined by extensions.
Given $f:S^1\to S^1$ a dynamics on the circle, we can extend it to $F:S^2\to S^2$ in such a way that the poles are repulsive points and the restriction to the equator is the original $f$. Moreover, this extension can be made canonical. Then, two dynamics $f,g$ on $S^1$ are going to be conjugated if and only if their extensions $F,G$ to $S^2$ are so and, since $S^2$ is simply connected, this holds if and only if the stacks $S^2//F$ and $S^2//G$ are isomorphic. It is tempting to emulate this construction for other non-simply connected manifolds $M$, by realizing them as the boundary of a simply connected manifold $\bar M$, and defining an extension $F$ of $f$ by means of a Morse-Smale flow. We propose this strategy to be explored elsewhere.
\end{remark}

%


\section{Relation to the stacky fundamental group}


Here we provide a conceptual interpretation of our lifted action by developing a down-to-earth approach to stack-theoretic fundamental groups and covering maps. Some of the material presented here is covered in \cite{mm2} in a rather combinatorial spirit, and a topological version of the abstract theory can be found in \cite{n}.

\bigskip


We set $I=[0,1]$ to be the unit interval. 
Given $\G\toto M$ a Lie groupoid and $x\in M$, a {\bf loop} on $M//\G$ based at $x$ is a fraction $\beta/\alpha:I\dashto M//\G$ where, as discussed before, $\alpha$ is a Morita map and $\beta$ is a Lie groupoid map satifying $\beta(g)=u_x$ a unit whenever $\alpha(g)$ is $u_0$ or $u_1$. 
Two such fractions are {\bf equivalent} if there are Morita maps $\gamma,\gamma'$ and isomorphisms $\alpha\gamma\cong\alpha'\gamma'$ and $\beta\gamma\cong\beta'\gamma'$ that are the identity over $0$ and $1$. We say that two loops $\beta/\alpha$, $\beta'/\alpha'$ on $M//G$ based at $x$ are {\bf homotopic} if there is a fraction $\psi/\phi:I\times I\to M//\G$ with $\psi/\phi\mid_{I\times 0}\sim \beta/\alpha$, 
$\psi/\phi\mid_{I\times 1}\sim \beta'/\alpha'$, and $\phi^{-1}(\{0,1\}\times I)\subset\ker\psi$. Loops up to homotopy, with multiplication given by juxtaposition, define the {\bf stacky fundamental group} $\pi_1(M//\G,x)$.


\begin{remark}\label{rmk:path}
By a {\bf regular cover} of the unit interval $I$ we mean a cover $\U=\{U_1,\dots,U_N\}$ in which each $U_i$ is a small connected neighborhood of $[i-1/N,i/N]$. Since these covers are cofinal, every loop in $\G\toto M$ based in $x$ is equivalent to one of the type $\beta/\pi_\U$ with $\U$ a regular cover. Thus a loop can be thought of as a sequence of geometric curves $\gamma_i$ with algebraic jumps $g_j$ linking their endpoints,
$$
\bullet \overset{\gamma_N}{\flowsfrom}
\bullet \overset{g_{N-1}}{\from}
\bullet\cdots
\bullet \overset{\gamma_2}{\flowsfrom}
\bullet \overset{g_1}{\from}
\bullet \overset{\gamma_2}{\flowsfrom}
\bullet
$$
Similarly, homotopies can be generated by elementary moves, resulting from considering product of regular covers $\U\times\U=\{U_i\times U_j\}$. In this way we recover the combinatorial definitions of paths and homotopies in Haefliger's approach to the fundamental group of groupoids (cf. \cite[\S 3.3]{mm2}, see also \cite{mt}).
\end{remark}


Going back now to our groupoids of interest, we can apply our Representation Theorem \ref{thm2} to get the following stacky interpretation of the lifted group of section \ref{sec:lifting}.

\begin{proposition}
Given $G\action M$ an action of a discrete group over a connected manifold, and given $x\in M$, the stacky fundamental group $\pi_1(M//G,x)$ is isomoprhic to the lifted group $\tilde G$ introduced before.
\end{proposition}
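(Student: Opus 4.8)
The plan is to build the isomorphism $\Phi\colon \tilde G \to \pi_1(M//G,x)$ directly, using the Representation Theorem to reduce loops in $M//G$ to data on the universal cover, and Lemma \ref{lemma:deck} to identify that data with elements of $\tilde G$. First I would recall that, by the combinatorial description in Remark \ref{rmk:path}, any loop based at $x$ is equivalent to one presented over a regular cover $\U$ of $I$, i.e.\ a concatenation of geometric paths $\gamma_i$ in $M$ joined by algebraic jumps $g_j\in G$. Such a presentation is exactly a groupoid map $I_\U \to G\ltimes M$; pulling back along the Morita cover $(\tilde G\ltimes\tilde M)\to(G\ltimes M)$ — or rather lifting the underlying path — I would argue that, since $I$ is simply connected, the whole loop lifts to a path $\tilde\sigma$ in $\tilde M$ starting at a chosen $\tilde x$ over $x$, together with a well-defined endpoint in $\tilde M$ lying over $x$. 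The assignment sending a loop to its lifted endpoint, viewed via Lemma \ref{lemma:deck}'s identification of $\tilde M$ with homotopy classes of paths, produces the candidate element of $\tilde G$: the group element $g = g_{N-1}\cdots g_1\in G$ recorded by the jumps, together with the deck-type path-class datum recorded by the lift. This is well defined on homotopy classes because a homotopy of loops, again by Remark \ref{rmk:path}, is generated by elementary moves that do not change either the total group element or the homotopy class of the lifted path.

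Next I would check $\Phi$ is a homomorphism: juxtaposition of loops corresponds to composing the lifts, which multiplies the recorded group elements and concatenates the lifted paths — precisely the product law of $\tilde G\subset G\times\diff(\tilde M)$ described in Section \ref{sec:lifting} (and made explicit in Example \ref{ex:lifted-group} in the free case). For surjectivity, given $(g,\phi)\in\tilde G$ I would exhibit an explicit loop: pick a path in $M$ realizing the deck datum $\alpha_\phi$ of Lemma \ref{lemma:deck}, append a single algebraic jump by $g$ at the end (using that $\rho(g)$ fixes the relevant orbit endpoint up to the loop condition), and observe this presents a loop mapping to $(g,\phi)$. For injectivity, suppose a loop maps to the identity of $\tilde G$; then its lift to $\tilde M$ is a genuine loop at $\tilde x$, hence null-homotopic since $\tilde M$ is simply connected, and the recorded group element is trivial, so I can push the null-homotopy downstairs and convert it, via the elementary moves of Remark \ref{rmk:path}, into a stacky homotopy to the constant loop. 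Alternatively, and perhaps more cleanly, I would deduce injectivity and surjectivity at once from the Representation Theorem \ref{thm2} applied to $H\ltimes N := $ the trivial/terminal situation, identifying $\pi_1$ with $\mathrm{Hom}$-classes of fractions $I\dashrightarrow M//G$ and then with groupoid maps out of the simply connected cover $\tilde M$.

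The main obstacle I anticipate is bookkeeping the basepoint and the "rel endpoints" conditions carefully enough that the lift is genuinely well defined and the two elementary-move descriptions (of loops and of homotopies) match on the nose. Concretely: a loop is a fraction $\beta/\alpha$ with $\beta(g)=u_x$ over the endpoints of $I$, and one must verify that the lifted path's endpoint does not depend on the choices made in refining to a regular cover, nor on the representative within a fraction-equivalence class — this is where the cofinality of regular covers and the precise definition of equivalence of fractions get used, and it is easy to be sloppy. A secondary subtlety is that $\tilde G$ was defined as a subgroup of $G\times\diff(\tilde M)$ rather than intrinsically in terms of path-classes, so I must consistently invoke Lemma \ref{lemma:deck} (and Remark \ref{rmk:group-extension}) to move between the "diffeomorphism of $\tilde M$" picture and the "homotopy class of path $x\flowsfrom \rho(g)x$" picture; once that dictionary is fixed, the homomorphism property and the group law of $\tilde G$ line up automatically. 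I would present the path-lifting construction as the conceptual heart of the proof and relegate the elementary-move verifications to a remark referencing \cite{mm2}.
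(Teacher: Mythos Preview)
Your approach is essentially the same as the paper's: both represent a loop over a regular cover, extract the product $g=g_{N-1}\cdots g_1$ of the jumps together with a path in $M$ from $gx$ to $x$, and convert that path into a lift of $\rho(g)$ via Lemma~\ref{lemma:deck} to obtain an element of $\tilde G$. The paper packages the ``lift the loop'' step as a single invocation of the Representation Theorem~\ref{thm2} (applied to the unit groupoid $I$, which is its own universal cover) to produce the descended path $\bar\beta:I\to G\ltimes M$, whereas you describe the same construction as path-lifting to $\tilde M$; note also that you announce $\Phi:\tilde G\to\pi_1(M//G,x)$ but then actually build the map in the other direction, as the paper does.
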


\begin{proof}
Given a loop of $M//G$ based at $x$, writing it as a fraction $\beta/\pi_\U$ for a regular cover of $I$,
$$\xymatrix@R=0pt{
& I_\U \ar[dl]_{\pi_\U} \ar[dr]^\beta & \\
I \ar@{-->}[rr]_{\bar\beta}  & & G\ltimes M
}$$
we know from the proof of theorem \ref{thm2} that there is a path $\bar\beta:I\to G\ltimes M$ making the triangle commutative up to a groupoid map isomorphism $\beta\cong\bar\beta\pi_\U$, which can be encoded in a cocycle $\{c_i:U_i\to G\}$. 
Without loss of generality, we can assume $c(1)=1$. 
The loop $\beta/\pi_\U$, with the notations of Remark \ref{rmk:path}, is given by paths $\gamma_i$ linked by groupoid elements $g_j$. It follows that
$c(0)$ is the product of the $g_j$'s while $\bar\beta$ is the path from $c(0)x$ to $x$ obtained by translating the $\gamma_i$ using the $g_j$ to paste these arcs together into a continous path. By lemma \ref{lemma:deck}, in turn, $\bar \beta$ defines a diffeomorphism $\phi_{\bar\beta}$ of $\tilde M$ covering $c(0):M\to M$. We now define $\theta:\pi_1(M//G,x)\to \tilde G\subset G\times\diff(\tilde M)$ by
$\theta(\beta/\pi_\U)=(c(0),\phi_{\bar\beta})$. 
It is straightforward to check that $\theta$ is well-defined, bijective, and preserves the group structure.
\end{proof}


As in topology, the fundamental group classifies covering maps. A map of differentiable stacks $M//\G\to N// \H$ is a {\bf covering} (see eg. \cite{n}) if for every map from a manifold $X\to N// \H$ the stacky fibered product $P=X\times_{N//\H} M//\G$ exists, it is a manifold, and the base-change map $P\to X$ is a covering map of manifolds.
Composition of stacky covering maps and base-change of covering maps are covering maps.
By representing the stacks with Lie groupoids, the stacky fibered product is modeled by the {\em homotopy fiber product} of the Lie groupoids (cf. \cite{dh}).

\begin{example}
A map between manifolds, viewed as differentiable stacks by means of the unit groupoids, is a covering if it is so in classic topology.
\end{example}

\begin{example}
Let $\G\toto M$ be a Lie groupoid, and call $\pi:M\to M//\G$ the canonical projection comming from the units inclusion $M \to \G$. Then $G$ identifies with the fiber product of $\pi$ with itself (cf. \cite{dh}), and the base-change maps are the source and target maps, $G\to M$. Since every map $X\to M/\G$ from a manifold locally factors through the projection $\pi:M\to M//\G$, we conclude that $\pi$ is a covering if and only if $s:\G\to M$ is so.
\end{example}

A particular case of previous example is that of an action groupoid of an action of a discrete group. We say that a connected stacky covering map $N//\H\to M//\G$ is {\em universal} if any other connected cover $N'//\H'\to M//\G$ is a quotient of it.
As in the manifold case, the group of deck transformations provides us with a model for the stacky fundamental group that is independent of the base point. Moreover, the following proposition allows us to interpret the lifted action of $\tilde G$ on $\tilde M$ as stacky deck transformations.

\begin{proposition}\label{prop:fundG}
Given $G\action M$ an action of a discrete group on a connected manifold,
the composition $q:\tilde M\to M\to M//G$ is the stacky universal cover, and the group
$$\deck(\tilde M\to M//G)=\{(\phi,\alpha):\phi:\tilde M\to\tilde M,\alpha:q\phi\then q\}$$
is isomorphic to the stacky fundamental group $\pi_1(M//G,x) \simeq \tilde G$.
\end{proposition}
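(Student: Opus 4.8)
The plan is to prove the two assertions in turn: first that $q\colon \tilde M\to M\to M//G$ is a stacky covering and is universal, and second that its deck group is isomorphic to $\tilde G$, which by the previous proposition is $\pi_1(M//G,x)$. For the covering claim, I would model $M//G$ by the action groupoid $G\ltimes M$, and recall from the preceding example that the projection $M\to M//G$ is a covering of stacks precisely when the source map $s\colon G\ltimes M\to M$ is a covering map of manifolds — which it is, being a disjoint union of copies of $M$ indexed by the discrete set $G$. Composing with the genuine covering $\tilde M\to M$ and invoking that compositions of stacky coverings are stacky coverings gives that $q$ is a stacky covering. Its source is $\tilde M$, which is connected, so $q$ is a connected cover.

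Next I would verify universality: given any connected stacky cover $N//\H\to M//G$, I must exhibit it as a quotient of $\tilde M\to M//G$. The cleanest route is to pull back along $\tilde M\to M//G$: form the homotopy fiber product $P=\tilde M\times_{M//G}(N//\H)$, which by the definition of stacky covering is a manifold and maps to $\tilde M$ by an ordinary covering map; since $\tilde M$ is simply connected, $P\to\tilde M$ is a trivial covering, i.e.\ a disjoint union of copies of $\tilde M$. Picking one such copy gives a section $\tilde M\to P$, hence a map $\tilde M\to N//\H$ over $M//G$, exhibiting $N//\H$ as a quotient of $\tilde M$ (after checking this map is itself a stacky covering, again by the composition/base-change stability of coverings). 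This is essentially the usual argument that a simply connected cover dominates all connected covers, transported through the stacky fiber-product formalism.

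For the deck-group computation, I would unwind the definition: an element of $\deck(\tilde M\to M//G)$ is a pair $(\phi,\alpha)$ with $\phi\colon\tilde M\to\tilde M$ a diffeomorphism and $\alpha\colon q\phi\Rightarrow q$ a $2$-morphism (isomorphism of stack maps) over $M//G$. Representing $q$ by the composite $\tilde M\to M\to G\ltimes M$, such a $2$-morphism is exactly a smooth choice $x\mapsto(g(x),\cdot)\in G\ltimes M$ intertwining $p\phi$ and $p$; connectedness of $\tilde M$ forces $g(x)$ to be a constant $g\in G$, so $\alpha$ records an element $g\in G$ together with the identity $p\circ\phi=\rho(g)\circ p$ — that is, $(g,\phi)$ is precisely a compatible pair in the sense defining $\tilde G\subset G\times\diff(\tilde M)$. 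I would then check this assignment $(\phi,\alpha)\mapsto(g,\phi)$ is a group homomorphism (composition of deck transformations corresponds to the product in $G\times\diff(\tilde M)$, up to the bookkeeping of how the $2$-morphisms compose), that it is injective (a trivial $2$-morphism forces $g=e$ and then $\phi$ is a deck transformation lifting the identity; combined with $\phi$ being ``the'' chosen lift it is the identity), and surjective (every compatible pair $(g,\phi)\in\tilde G$ yields a deck transformation with $\alpha$ the evident $2$-cell). Combining with the previous proposition, $\deck(\tilde M\to M//G)\cong\tilde G\cong\pi_1(M//G,x)$.

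The main obstacle I anticipate is bookkeeping the $2$-morphisms correctly: unlike the manifold case, a stacky deck transformation is not just a map $\phi$ but a pair $(\phi,\alpha)$, and the group law on such pairs involves composing the $2$-cells via the groupoid multiplication. Making precise that this composition matches the product in $G\times\diff(\tilde M)$ — and that two deck transformations with the same $\phi$ but different $\alpha$ are genuinely distinct, corresponding to different elements of the extension $1\to\pi_1(M)\to\tilde G\to G\to 1$ — is where the argument needs care; everything else is a routine transcription of covering-space theory into the groupoid-fraction language of Section 6.
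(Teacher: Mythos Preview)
Your proposal is correct and follows essentially the same approach as the paper: the universality argument via the stacky fiber product $P=\tilde M\times_{M//G}(N//\H)$ and the triviality of $P\to\tilde M$ is exactly what the paper does, and your unwinding of a deck transformation $(\phi,\alpha)$ into a constant $g\in G$ plus the compatibility $p\circ\phi=\rho(g)\circ p$ matches the paper's computation that $\alpha:\tilde M\to G\times M$ lands in a single component $\{g\}\times M$. If anything, your outline is more explicit than the paper's, which simply declares the bijection ``evident'' after extracting $g$; your anticipated obstacle about bookkeeping the 2-cells---that the same $\phi$ can pair with different $\alpha$'s, reflecting the kernel of $\tilde G\to\diff(\tilde M)$ when $G$ does not act faithfully---is real and is exactly the content of the identification $\deck\cong\tilde G$ rather than $\deck\cong\{\phi\}$.
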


\begin{proof}
Note that $\tilde M\to M\to M//G$ is a composition of covering maps. Given any other connected covering map $N//\H\to M//G$, in the stacky fiber product
$$\xymatrix{
P \ar[r] \ar[d] & N//\H \ar[d] \\ \tilde M \ar[r] \ar@{-->}[ur] & \G
}$$
the map $P\to \tilde M$ has to be a trivial covering, then it admits a section, which composed with the projection $P\to N//\H$ gives the diagonal covering.

Regarding the second statement, 
an isomorphism of groupoid morphisms
$$\xymatrix@C=0pt{
(\tilde M\toto\tilde M) \ar@{}[drr]|{\alpha\Leftarrow}\ar[dr]_q \ar[rr]^\phi & & (\tilde M\toto\tilde M) \ar[dl]^q\\
& (G\ltimes M\toto M) & }$$
is given by a map $\alpha:\tilde M\to G\times M$, whose image must be included in a component $g\times M$, and satisfy that $s\alpha=q:\tilde M\to M$ and $t\alpha=q\phi$, from where $\rho_gq=q\phi$. The bijection $\deck(\tilde M\to M//G)\cong\tilde G$ with the lifted group becomes evident, and it is easy to check that it preserves the group composition (see \cite{mm2,n}).
\end{proof}

Our final general statement of this section is that the stacks arising from actions $G\action M$ of discrete groups over connected manifolds are exactly the differentiable stacks admiting a manifold as a covering.

\begin{proposition} \label{prop:stcover}
If $M//\G$ is a differentiable stack and $q:N\to M//\G$ is a stacky covering map from a connected manifold, then $M//\G$ is isomorphic to the orbit stack of a discrete group $H$ acting on $N$.
\end{proposition}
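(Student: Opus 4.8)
The plan is to show that a stacky covering map $q:N\to M//\G$ from a connected manifold exhibits $M//\G$ as the orbit stack of a discrete group acting on $N$. First I would form, by the argument of the previous proposition, the homotopy fiber product $R = N\times_{M//\G} N$. Since $q$ is a covering, the two base-change maps $s,t:R\to N$ are honest covering maps of manifolds; moreover $R\toto N$ carries a natural groupoid structure (composition, unit and inverse coming from the diagonal $N\to R$ and the symmetry of the fiber product), and the projection $R\toto N \to M//\G$ is a Morita map presenting the same stack as $\G\toto M$. So it suffices to show that this groupoid $R\toto N$ is isomorphic to an action groupoid $H\ltimes N$ for a discrete group $H$.

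The key point is that $R\toto N$ is a groupoid whose source map is a covering of $N$; such groupoids are rigid. Concretely, I would fix a base point $x_0\in N$ and consider the isotropy $H = R_{x_0}$, a discrete group (discrete because $s$ is a covering, so the source fibers, and hence $R$, are $0$-dimensional over $N$, which is a manifold). The classifying loops interpretation developed in this section identifies $H$ with $\pi_1(M//\G, q(x_0))$, and since $N\to M//\G$ is the universal cover — indeed $N$ is connected and a manifold, so simply... no: rather, $N\to M//\G$ being a connected covering, $R\toto N$ is transitive (because $q$ is surjective onto each orbit and $N$ is connected, so any two points of $N$ lie in the same $R$-orbit) with all isotropies isomorphic to $H$. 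A transitive groupoid $R\toto N$ with discrete isotropy $H$ is the gauge groupoid $P\tensor^H P$ of a principal $H$-bundle $P\to N$ (Example~\ref{ex:gauge}); but here $P\to N$ is itself a covering of $N$ with structure group $H=\pi_1(M//\G)$, and the relevant $P$ is exactly the pullback $N\times_{M//\G}\tilde{\mathcal X}$ of the universal cover, which by the universal property splits: $P\to N$ admits an $H$-equivariant trivialization. Hence $R\tensor\cong H\ltimes N$ as groupoids, and $M//\G\simeq N//R\simeq N//H$.

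Let me restate the core of the argument more carefully, since the trivialization is the crux. Having $H=\pi_1(M//\G,q(x_0))$ acting on $N$: the action is defined via deck transformations, exactly as in the manifold case — an element of $H$ is represented by a loop, which one lifts along the covering $q$ starting at any point of $N$ to produce a deck transformation of $N$ over $M//\G$; these assemble into an action $H\action N$ because $N$ is connected and $q$ is a covering. Then the obvious groupoid morphism $H\ltimes N \to R$, sending $(h,x)$ to the arrow given by the lifted loop from $x$, is a bijection on arrows (it is a Morita morphism that is the identity on objects, hence by fully faithfulness an isomorphism), and compatibility of the base-change maps shows it descends to the required stack isomorphism $N//H \to M//\G$.

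The main obstacle I anticipate is verifying that the source/target fibers of $R = N\times_{M//\G} N$ are genuinely discrete and that $R$ is a manifold with $s$ a covering — this is where the hypothesis "$q$ is a covering map" is used in an essential way, via the fact that $R\to N$ is the base-change of $q$ along $q$ itself, hence a covering of manifolds. The secondary subtlety is the well-definedness and transitivity of the $H$-action on $N$: one must check that lifting homotopic loops gives the same deck transformation (homotopy invariance of path lifting, now for stacky covers, which follows from the covering definition applied to $X = I\times I$) and that every pair of points of $N$ is joined by an $R$-arrow, which uses connectedness of $N$ together with surjectivity of $q$ onto orbits. Once these points are settled, the identification with the gauge groupoid and the splitting of the universal cover pullback are formal.
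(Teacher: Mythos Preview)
Your argument has a genuine gap at the transitivity step. You claim that $R\toto N$ is transitive because ``$q$ is surjective onto each orbit and $N$ is connected'', but this is false: the orbits of $R=N\times_{M//\G}N$ are precisely the fibers of $q$, so the orbit space of $R\toto N$ is the coarse moduli space of $M//\G$, not a single point. Connectedness of $N$ does not force there to be only one orbit.

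This is not a repairable detail but a real obstruction. Take $M//\G$ to be an ordinary connected manifold $M$ and $q:N\to M$ a connected non-Galois finite cover (for instance a degree-$3$ cover with monodromy the full symmetric group $S_3$). Then $R\toto N$ is the relation groupoid of the fibers of $q$; its isotropy groups are all trivial, so your $H=R_{x_0}$ is the trivial group and $N//H=N$, which is not isomorphic to $M$. In this situation $R\toto N$ is simply not an action groupoid, and the deck-transformation action you propose does not act transitively on the fibers of $q$. So without an extra hypothesis on $N$ the strategy cannot succeed, and the gauge-groupoid discussion that follows rests on a false premise.

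The paper handles this by first replacing $N$ with its universal cover $\tilde N$: the composite $\tilde N\to N\to M//\G$ is still a stacky covering map from a connected manifold, so the hypotheses are preserved. With $\tilde N$ simply connected, the source map $s:P\to\tilde N$ of the fiber-product groupoid $P=\tilde N\times_{M//\G}\tilde N$ is a covering of a simply connected base and is therefore globally trivial, $P\cong H\times\tilde N$ for a discrete set $H$. The groupoid structure then endows $H=\pi_0(P)$ with a group multiplication, and the target map becomes an honest action $H\action\tilde N$. Passing to $\tilde N$ at the outset is exactly the missing idea.
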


\begin{proof}
By replacing $N$ with its universal cover, we may assume that $N$ is simply connected. Then the stacky fiber product of $q$ with itself
$$\xymatrix{
P \ar[r] \ar[d] & N \ar[d] \\  N \ar[r] & M//\G
}$$
give us a manifold $P$, and considering the iterated fiber products, we gain a Lie groupoid structure on $P\toto N$ such that $N//P\cong M//\G$. Since the source and target are base-change of $q$, they are covering maps, and since $N$ is simply connected, we get a trivialization of $s$ as $P=N\times H\to N$, with $H$ discrete. Finally, $H$ inherits a group structure by taking connected components on the groupoid, $\pi_0(P)=H\toto \pi_0(N)=\ast$, and the target $H\times N\to N$ gives the desired action.
\end{proof}

\section{The characteristic class of a dynamics}


In this section we show how to completely characterize a dynamics from a stacky viewpoint. We do it by identifying a cohomology class $c_f\in H^1(M//f)$ that allow us to recover $M$ and $f$ up to conjugation. We call $c_f$ the {\em characteristic class} of $f$. We explain how to reconstruct the whole data from $c_f$, and explain the analogy with the classical theory of characteristic classes of principal bundles.

\bigskip


Given a dynamics $f:M \to M$, recalling that the acting group is $G =\Z$, we have that $\tilde G_f \subset \Z \times \diff(\tilde M)$ consists of all lifts of $f$ and its iterates to the universal cover $\tilde M$ (see Lemma \ref{lemma:deck} and Example \ref{ex:lifted-group}). The first projection then defines a map $\tilde G_f \to \Z$ and, using the isomorphism of Proposition \ref{prop:fundG}, we get a group morphism
$$ c_f: \pi_1(M//f,x) \simeq \tilde G_f \to \Z\subset\R $$
This $c_f$ is characterized by mapping the lifts of $f$ to 1 and the geometric loops $\gamma \in \pi_1(M,x)\subset \pi_1(M//f,x)$ to 0. In particular, it does not depend on the base point $x$. 


This $c_f$ has a clean interpretation in terms of cohomology of stacks. Given $\G\toto M$ a Lie groupoid, its {\bf Bott-Shulman complex} $(\Omega^q(\G_p),d_h,d_v)$ is the double complex of differential forms over the {\em nerve} of $\G$, whose horizontal and vertical differentials are given by the simplicial structure and the de Rham differential, respectively. The  cohomology $H(\Omega^\bullet(\G_\bullet))$ of the total complex is a Morita invariant, and is called the {\bf cohomology} of the stack $M//\G$. Details can be found in \cite{be}.


We are interested here in the very particular case of $H^1$.  Before establishing the main theorem, we will relate the stacky fundamental group with the stacky cohomology. A key intermediate ingredient here is the stacky version of singular homology $H^{sing}_\bullet(M//\G)$, for which we refer the reader to \cite{be}. 

\begin{lemma}
Given $\G\toto M$ a Lie groupoid with $M$ connected, there are canonical isomorphisms: 
$$H^1(M//\G)
\cong {\rm Hom}(H^{sing}_1(M//\G),\R)
\cong {\rm Hom}(\pi_1(M//\G,x),\R).$$
\end{lemma}

\begin{proof}
The first isomorphism is given in \cite[\S \emph{Relation to de Rham cohomology}]{be}, it comes from the pairing between de Rham cohomology and singular homology given by integrating forms over cycles, and works for stacks analogously as it does for manifolds.
The second isomorphisms is a direct consequence of a stacky version of Hurewicz Theorem, namely, an isomorphism between the first homology group and the abelianization of the fundamental group. The proof of this second statement can be carried out following the classical arguments, as we now sketch. We write $C\subset \pi_1(M//\G,[x])$ for the commutator subgroup, we define group homomorphisms $h$ and $k$ at the level of loops and cycles below,
$$
\pi_1(M//G,[x])/C\xto h H^{sing}_1(M//G),
\qquad
H^{sing}_1(M//G)\xto k \pi_1(M//G,[x])/C,
$$
and argue that they are mutual inverses.
For the sake of simplicity, we use the combinatorial approach to the stacky fundamental group, discussed in \ref{rmk:path}, in which a loop based at $x$ is given by a sequence 
$$(\gamma_N,g_{N-1},\dots,\gamma_2,g_1,\gamma_1)$$
of paths $\gamma_i$ in $M$ and arrows $g_j$ in $\G$ such that $s(g_i)=\gamma_i(1)$, $\gamma_{i}(0)=t(g_{i-1})$ and $\gamma_N(1)=\gamma_1(0)=x$. 
On the other hand, following \cite[\S \emph{Singular homology}]{be}, stacky singular 1-cycles admit a similar description as a formal sum of paths and arrows with matching boundaries (see \cite[\S \emph{What do cycles look like?}]{be} for details). We can then define $h$ by mapping a stacky loop as above to the 1-cycle $[\sum \gamma_i + \sum g_i]$. The inverse $k$ can be defined at the level of basic 1-chains, $y'\overset\gamma\flowsfrom y$ and $y'\xfrom g y$, by fixing smooth paths $\gamma_y:x\leadsto y$ for every $y\in M$, and setting $k(\gamma)=[\gamma_{y'}^{-1}\ast\gamma\ast\gamma_y]$ and $k(g)=[(\gamma_{y'}^{-1},\gamma,\gamma_y)]$. It is straightforward to check that $h$ and $k$ are indeed well-defined, group morphisms, and mutual inverses.
\end{proof}

In our case of a dynamics $f:M\to M$ over a manifold, we can regard $c_f$ as a stacky 1-cocycle associated to $f$, we call it the {\bf characteristic class} of $f$.


\begin{theorem}
Given $f:M\to M$ a dynamics on a connected manifold,
the stack $M//f$ and the stacky 1-cocycle $c_f\in H^1(M//f)$ encode $f$ up to conjugation.
\end{theorem}

\begin{proof}
If $(\lambda,\phi): \tilde G_{f_1} \ltimes \tilde M_1 \to \tilde G_{f_2} \ltimes \tilde M_2$ is a groupoid map inducing an isomorphism of stacks $M_1//f_1 \simeq M_2//f_2$ (c.f. Corollary \ref{cor:1}), and moreover $(\lambda,\phi)$ preserves the associated stacky 1-cocycles, then we can write $c_{f_1} = c_{f_2} \circ \lambda$ and it is easy to see that $\phi:\tilde M_1\to \tilde M_2$ must descend to a map $M_1\to M_2$ that gives a conjugation $f_1 \sim f_2$. Conversely, any conjugation $f_1\sim f_2$ can be lifted to a groupoid isomorphism preserving the stacky 1-cocycles as above.  
\end{proof}


Let us discuss the reconstruction of a dynamics out of the pair $(\X,c)$, where $\X$ is a differentiable stack and $c \in H^1(\X)$ is a stacky 1-cocycle satisfying the following conditions. First, $\X$ admits a covering by a manifold (c.f. Proposition \ref{prop:stcover}). Let us denote $\tilde M \to \X$ the universal cover, in which $\tilde M$ is thus a simply connected manifold such that $\X\simeq \tilde M//\tilde G$ for some discrete group action $\tilde G \action \tilde M$. Second, choosing a reference point $x\in \tilde M$, the representation of the cocycle as a group morphism $c : \pi_1(\tilde M//\tilde G,x) \to \Z$ is a surjective map and its kernel $K=Ker(c)$ acts freely and properly on $\tilde M$ by restriction of the action of $\pi_1(\tilde M//\tilde G,x)\simeq \tilde G$ on $\tilde M$ to $K$ (c.f. Proposition \ref{prop:fundG}). We then get the manifold $M$ as the quotient of $\tilde M$ by $K$ together with a residual action of $\Z = \pi_1(\tilde M//\tilde G, x)/ K$ on $M$, so that the \emph{induced dynamics} $f:M\to M$ is then given by the action of $1\in \Z$ on $M$. Notice that the choice of a loop  $l\in \pi_1(\tilde M//\tilde G, x)$ such that $c(l)=1$ is equivalent to the choice of a lift $\tilde f: \tilde M \to \tilde M$ of $f$ to the universal cover.


\begin{remark}
We close by explaining the analogy with the classical theory of characteristic classes. To any principal $G$-bundle $E\to M$ we can associate its Gauss map $g:M\to BG$  into the classifying space of $G$, so $E$ is isomorphic to the pullback of the universal principal $G$-bundle by $g$. Then the image of $g^*:H^\bullet(BG)\to H^\bullet(M)$ is the subalgebra of characteristic classes of $E$ within $H^\bullet (M)$. When $H^\bullet(BG)$ is canonically a free algebra, the characteristic classes of $E$ can be encoded by the images $c_i(E)\in H^\bullet(M)$ of the generators along $g^*$.
In our case, given a dynamics $f:M\to M$, the quotient map $M\to M//f$ is not only a stacky covering map, but moreover a stacky principal $\Z$-bundle (cf. \cite{dh}). This is the pullback of the universal one by the map $g:M//f\to \ast//\Z$, which at the level of groupoids is just the projection 
$(\Z\ltimes_fM\toto M)\to (\Z\toto \ast)$. A simple computation shows there are canonical isomorphism $H^q(\ast//\Z)\cong\R$ for $q=0,1$ and $H^q(\ast//\Z)=0$ otherwise. Then $c_f=g^*(c)$ is the pullback of the canonical generator of $H^1(\ast//\Z)$. 
\end{remark}



\section{Applications and examples}


In this section we apply our results in simple examples of dynamics, and relate the outcome to some results appearing in the literature. 

\subsection*{Rotations on the circle}

Let $M=S^1$, with universal cover $\tilde M=\R$ and $\pi_1(M)=\Z$ acting by integer translations. We shall consider dynamics given by rigid rotations $r:S^1 \to S^1$. Such a rotation can be uniquely lifted to a rigid translation 
$$ x \mapsto x + \tau, x \in \R$$
with $\tau \in [0,1)$ called the \emph{rotation number} of $r$.
In this case, we have that the lifted group is $\tilde G = \Z \times \Z$ and we denote the induced lifted action by 
$$(\Z\times\Z)\action_{\tau,1}\R, \ \ (m,n) \cdot x = x + m\tau + n.$$

When the rotation number $\tau=p/q$ is \emph{rational}, we have that the coarse orbit space $S^1/r$ inherits a smooth structure diffeomorphic to an ordinary circle $S^1/r \simeq S^1$ and such that the quotient map defines a covering $S^1 \to S^1/r$. At the level of stacks, considering $p$ and $q$ coprime so that there exist $a,b \in \Z$ satisfying $aq - bp = 1$, we get that $\lambda: \Z \times \Z \to \Z \times \Z, (m,n) \mapsto (a m + b n,pm +q n)$ is an isomorphism which together with $\phi:\R \to \R, \ x \mapsto q x$ define an isomorphism of actions
$$(\phi,\lambda): (\Z\times\Z)\action_{p/q,1}\R \simeq (\Z\times\Z)\action_{0,1}\R.$$
On the right hand side above, one of the $\Z$-factors acts trivially and the other acts by integer translations. The quotient stack is thus isomorphic to a product
$$ S^1//r \simeq S^1 \times \ast//\Z $$
where the manifold $S^1$ is seen as a stack and $\ast//\Z$ denotes the quotient stack associated to the trivial $\Z$-action over a 1-point space $\ast$. 
Then, the orbit spaces of $S^1$ by rational rotations are all isomorphic both as coarse orbit spaces or as orbit stacks and the orbit stack contains essentially the same information as the coarse orbit space.

Let us now consider an \emph{irrational rotation}, namely, a rigid rotation $r$ of $S^1$ such that its rotation number $\tau$ is irrational. In this case, the orbit through any point $x\in S^1$ is dense and thus the coarse orbit space $S^1/r$ inherits the trivial topology. The quotient stack $S^1//r$, though, is able to retain non-trivial information of the original dynamics:

\begin{proposition}
Let $r_1,r_2$ be two rigid rotations acting on $S^1$ with irrational rotation numbers $\tau_1,\tau_2$, respectively. Then, the two associated quotient stacks are isomorphic, $S^1//r_1 \simeq S^1//r_2$, iff $\tau_1,\tau_2$ are in the same orbit of the $GL_2(\Z)$-action by homographies:
$$ \tau \mapsto \frac{a \tau + b}{c \tau + d}, \ \left(\begin{array}{cc}a & c \\ b & d \end{array}\right)\in GL_2(\Z).$$
\end{proposition}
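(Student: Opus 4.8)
The plan is to feed everything into Corollary \ref{cor:1}: the stacks $S^1//r_1$ and $S^1//r_2$ are isomorphic if and only if there is a group isomorphism $\lambda:\tilde G_1\to\tilde G_2$ between the lifted groups together with a $\lambda$-equivariant diffeomorphism $\varphi:\R\to\R$ of the universal covers. By the discussion preceding the Proposition (following Example \ref{ex:lifted-group}), for a rigid rotation $r_i$ of rotation number $\tau_i$ the lifted group is $\tilde G_i\cong\Z\times\Z$ acting on $\R$ by $(m,n)\cdot_{\tau_i}x=x+m\tau_i+n$. In particular $\lambda$ ranges over all of $\mathrm{Aut}(\Z^2)=GL_2(\Z)$, and the only genuine constraint is the existence of a compatible $\varphi$; writing $\lambda=\left(\begin{smallmatrix}a&b\\c&d\end{smallmatrix}\right)$, equivariance reads $\varphi(x+m\tau_1+n)=\varphi(x)+(am+bn)\tau_2+(cm+dn)$.

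The core of the argument is the claim that any such $\varphi$ must be \emph{affine}. Let $\Gamma_1=\Z\tau_1+\Z\subset\R$, which is dense because $\tau_1$ is irrational, and is exactly the image of the translation part of the $\tilde G_1$-action; since $\tau_1$ irrational also makes $(m,n)\mapsto m\tau_1+n$ injective, the equivariance relation shows that $\varphi(x+v)-\varphi(x)$ depends only on $v\in\Gamma_1$ and not on $x$. Hence $\varphi'(x+v)=\varphi'(x)$ for all $v\in\Gamma_1$, and by density of $\Gamma_1$ and continuity of $\varphi'$ the derivative is constant, so $\varphi(x)=\alpha x+\beta$ with $\alpha\neq0$. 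This is precisely where irrationality enters; for rational rotation numbers $\Gamma_1$ is discrete, the rigidity fails, and one recovers instead the behaviour described earlier.

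Plugging $\varphi(x)=\alpha x+\beta$ into the equivariance identity on the two generators $(1,0)$ and $(0,1)$ of $\Z^2$ gives $\alpha\tau_1=a\tau_2+c$ and $\alpha=b\tau_2+d$, whence $\tau_1=\frac{a\tau_2+c}{b\tau_2+d}$; since $\left(\begin{smallmatrix}a&c\\b&d\end{smallmatrix}\right)$ again lies in $GL_2(\Z)$, this says exactly that $\tau_1$ and $\tau_2$ are in the same $GL_2(\Z)$-orbit under the homographic action of the statement. Conversely, if $\tau_1$ and $\tau_2$ are in the same orbit, one reads off the corresponding $\lambda\in GL_2(\Z)$, sets $\alpha=b\tau_2+d$ (nonzero because $\tau_2$ is irrational) and $\varphi(x)=\alpha x$, and checks the equivariance identity directly from $\tau_1=\frac{a\tau_2+c}{b\tau_2+d}$; Corollary \ref{cor:1} then yields $S^1//r_1\simeq S^1//r_2$.

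The main obstacle, and the only step that is not bookkeeping, is the affineness claim above; everything else is a matching of conventions between $\mathrm{Aut}(\Z^2)$ and the $GL_2(\Z)$-action by homographies. In writing up one should be careful that the transpose that appears when passing from $\lambda$ to the homography matrix is absorbed correctly (so that the formula $\tau\mapsto\frac{a\tau+b}{c\tau+d}$ of the statement is reproduced), and note that the homographic action factors through $PGL_2(\Z)$, which does not affect the orbit equivalence being asserted.
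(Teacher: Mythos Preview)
Your proposal is correct. The reduction via Corollary \ref{cor:1} to the lifted $\Z^2$-actions on $\R$ is exactly the paper's starting point, and your treatment of the easy direction (given a homographic relation, write down a linear $\varphi$) matches the paper's.

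The genuine difference is in the converse. The paper does \emph{not} prove that $\varphi$ is affine. Instead it fixes a single point, chooses a sequence $(m_i,n_i)$ with $m_i\tau_1+n_i\to 0$ (possible precisely because $\tau_1$ is irrational), applies continuity of $\varphi$ at $0$ to the equivariance identity, and reads off $(a\tau_2+b)-\tau_1(c\tau_2+d)=0$ directly from the vanishing limit. Your route differentiates the equivariance relation, uses density of $\Gamma_1=\Z\tau_1+\Z$ to force $\varphi'$ constant, and then extracts the same relation from the affine form. Both arguments exploit irrationality in the same place (density of $\Gamma_1$), but yours proves a strictly stronger structural statement (every equivariant diffeomorphism is affine), at the mild cost of using $C^1$-regularity of $\varphi$; the paper's limit argument needs only continuity and would go through verbatim in the topological category. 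Your remark about the transpose bookkeeping between $\lambda\in\mathrm{Aut}(\Z^2)$ and the homography matrix in the statement is well taken and handled correctly.
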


\begin{proof}
By Corollary \ref{cor:1}, $S^1//r_1 \simeq S^1//r_2$ if and only if there exists an isomorphism of the lifted actions 
$$(\varphi,\lambda): (\Z\times\Z)\action_{\tau_1,1}\R \simeq (\Z\times\Z)\action_{\tau_2,1}\R.$$
Such an isomorphism consists of a group automorphism $\lambda = \left(\begin{array}{cc}a & c \\ b & d \end{array}\right) \in Gl_2(\Z)$ and a diffeomorphism $\varphi: \R \to \R$ such that for all $x\in \R$, $m,n\in \Z$ we have:
$$\varphi(x +  m\tau_1 + n) = \varphi(x) + (am+cn) \tau_2 + (bm + dn)$$

First, suppose that  $\tau_1 = \frac{a \tau_2 + b}{c \tau_2 + d}$, where $\lambda = \left(\begin{array}{cc}a & c \\ b & d \end{array}\right) \in Gl_2(\Z)$. Then we have an isomoprhism $(\varphi,\lambda)$ of actions as above, by defining $\varphi(x)= (c\tau_2 + d)x$.

Conversely, assume we have such an isomorphism of actions $(\varphi,\lambda)$. Let $m_i,n_i\in \mathbb N$ be such that $\underset{i\to \infty}{\lim}  m_i \tau_1 + n_i = 0$.
We then have by continuity that
$$\varphi(0) = \underset{i\to \infty}{\lim} \varphi(0+ m_i \tau_1 + n_i) = 
  \underset{i\to \infty}{\lim} [\varphi(0) + (a \tau_2+b) m_i + (c \tau_2 +d)n_i] $$
from where $\frac{a\tau_2 + b}{c \tau_2 + d} = \tau_1$ as wanted.
\end{proof}

The above result can be seen as a geometric analogue of a result by Rieffel in the context of $C^*$-algebras \cite{r}. In fact, the construction $\G\mapsto C^*(\G)$ that associates to a Lie groupoid its convolution algebra is functorial, in the sense that it transforms Morita equivalences of Lie groupoids into Morita equivalences of $C^*$-algebras (see eg. \cite{l}), and one of the $C^\ast$ implications of our result is already in \cite{con}.

For general diffeomorphisms of the circle, since conjugated dynamics yield isomorphic orbit stacks, the above proposition generalizes to any pair diffeomorphisms of $S^1$ which are conjugated to rotations. We believe it is further possible to provide a characerization of rotation numbers of general $f\action S^1$ in terms of the underlying quotient stack; these ideas will be explored elsewhere.


\subsection*{Hyperbolic toral automorphisms}


Let $X=T^n=\R^n/\Z^n$ be the $n$-torus, viewed as the quotient of the Euclidean space by the integer lattice. We shall consider hyperbolic toral automorphisms, namely maps $f_A:X\to X$ given by a matrix $A\in Gl_n(\Z)$ with no eigenvalues of norm 1. These are the simplest examples of Anosov diffeomorphisms. 


Given such a dynamics $f_A:X\to X$, the corresponding orbit topological space $X/f_A$ is very hard to handle. While the rational points are periodic, hence inducing closed points on the quotient, the other points have dense orbits, and any non-trivial open in the quotient must contain their images. Next we will show that the orbit stack allows us to classify them up to conjugacy and inversion.


Given $f_A:X\to X$, the lifted dynamics to the universal cover is 
the action by affine transformations $(\Z\ltimes_A\Z^n)\action \R^n$,  $(n,v)\cdot x=A^nx+v$. 
As it follows from Corollary \ref{cor:1}, the orbit stack keeps track of the action of the dynamics on the fundamental group. In this case, this is enough to recover the dynamics up to conjugation and inversion.

\begin{proposition}
Two dynamics given by hyperbolic toral automorphisms arising from the matrices $A,B$ yield isomorphic orbit stacks if and only if $A$ is conjugated to $B^{\pm 1}$.
\end{proposition}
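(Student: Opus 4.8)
The plan is to apply Corollary \ref{cor:1} to reduce the statement about orbit stacks to a purely group-theoretic statement about the lifted actions, and then to solve the resulting rigidity problem using the hyperbolicity of $A$ and $B$. Concretely, $X//f_A \simeq X//f_B$ holds if and only if there is a group isomorphism $\lambda : \Z\ltimes_A\Z^n \to \Z\ltimes_B\Z^n$ together with a $\lambda$-equivariant diffeomorphism $\varphi:\R^n\to\R^n$ intertwining the affine actions $(n,v)\cdot x = A^nx+v$ and $(m,w)\cdot y = B^m y + w$. The ``if'' direction is the easy one: given $CAC^{-1}=B^{\epsilon}$ with $C\in GL_n(\Z)$ and $\epsilon=\pm1$, one takes $\varphi$ to be the linear map $C$ (or $C$ precomposed with the flip implementing inversion when $\epsilon=-1$) and $\lambda$ the induced isomorphism of semidirect products; one checks directly that this is equivariant, hence descends to a conjugacy on the torus and gives an isomorphism of stacks.

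For the ``only if'' direction, I would first analyze $\lambda$ restricted to the normal subgroup $\Z^n = 0\ltimes\Z^n$. This subgroup is exactly the set of elements whose action on $\R^n$ has no fixed point together with the identity — more intrinsically, it is the unique maximal normal abelian subgroup of finite index-free type, or can be pinned down as $\pi_1(X)$ via the stacky fundamental group. A cleaner route: the commutator subgroup and the center of $\Z\ltimes_A\Z^n$ are controlled by $A$ having no eigenvalue $1$ (hyperbolicity in particular forbids eigenvalue $1$), which forces $\lambda(0\ltimes\Z^n) = 0\ltimes\Z^n$, so $\lambda$ induces an isomorphism $C\in GL_n(\Z)$ of the lattices and an automorphism $\pm\id$ of the quotient $\Z$. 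Then the semidirect-product relation $\lambda((1,0)(0,v)(1,0)^{-1}) = \lambda(0,Av)$ translates into $C A C^{-1} = B^{\epsilon}$ on $\Z^n$, which is the desired matrix conjugacy.

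The remaining point is to make sure the equivariant diffeomorphism $\varphi$ does not introduce extra freedom that would weaken the conclusion — but in fact we do not need $\varphi$ at all for the conclusion, since the matrix identity $CAC^{-1}=B^{\pm1}$ already follows from the existence of $\lambda$; the role of $\varphi$ is only to guarantee that such a $\lambda$ actually arises, i.e. it is used in the reverse bookkeeping. So the argument genuinely reduces to: \emph{an isomorphism of the lifted groups $\Z\ltimes_A\Z^n \cong \Z\ltimes_B\Z^n$ exists iff $A\sim B^{\pm1}$ in $GL_n(\Z)$}. The main obstacle is the intrinsic characterization of the subgroup $0\ltimes\Z^n$ inside $\Z\ltimes_A\Z^n$: one must show any abstract group isomorphism preserves it, which is precisely where hyperbolicity (no eigenvalue equal to $1$, so $A-\id$ is invertible over $\Q$, and no roots of unity among eigenvalues, so no torsion phenomena) is used to rule out exotic complements or alternative normal subgroups. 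Once that is secured, everything else is formal.

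Finally I would remark that the statement recovers, via the stack, the classical fact that hyperbolic toral automorphisms are classified up to topological conjugacy by the $GL_n(\Z)$-conjugacy class of the matrix (together with inversion), and that for $n=2$ this amounts to a statement about $SL_2(\Z)$-conjugacy classes of hyperbolic matrices, which can be made explicit via continued fractions; but those elaborations are not needed for the proof.
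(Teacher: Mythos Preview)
Your proposal is correct and follows essentially the same route as the paper. Both arguments invoke Corollary~\ref{cor:1} to reduce the stack isomorphism to an isomorphism $\lambda$ of the lifted groups $\Z\ltimes_A\Z^n\cong\Z\ltimes_B\Z^n$, then use the commutator computation (relying on $1$ not being an eigenvalue of $A$, which hyperbolicity guarantees) to show that $0\times\Z^n$ is characteristic, forcing $\lambda$ into block form and yielding $CAC^{-1}=B^{\pm1}$ from the conjugation relation $\lambda(c_{(1,0)}(0,v))=c_{\lambda(1,0)}\lambda(0,v)$; your observation that the equivariant diffeomorphism $\varphi$ plays no role in the ``only if'' direction is also implicit in the paper's proof, which uses only $\lambda$.
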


\begin{proof}
Notice that for any $A$ we have isomorphisms $\tilde G_A\cong \Z\ltimes_A\Z^n$, where the semi-direct product structure is the affine one, $(n,v)(n',v')=(n+n',v+A^nv')$.
If the stacks $T^n//f_A$ and $T^n//f_B$ are isomorphic, then the lifted groups to the universal cover $\tilde G_A$ and $\tilde G_B$ have to be isomorphic as well. Let us study the group $\tilde G_A$ in more detail.
Direct computation shows that the inverse of $(n,v)$ is $(-n,-A^{-n}v)$, and that the commutator of two elements is
$$[(n,v),(n',v')]=(0,v+A^nv'-A^{n'}v-v')$$
which shows that $[\tilde G_A,\tilde G_A]\subset0\times\Z^n$. Conversely, 
since $[(n,v),(1,0)]=(0,(I-A)v)$ and $1$ is not an eigenvalue of $A$, we conclude that $[\tilde G_A,\tilde G_A]=0\times\Z^n$.
It follows that the isomorphism $\lambda$ is a matrix block as follows,
$$\lambda:\begin{bmatrix}\pm 1 & 0\\\ast & \lambda'\end{bmatrix}$$
and $\lambda(1,0)=(\pm1,v_0)$. Lastly, if we denote by $c_{(n,v)}$ the operator which conjugates by $(n,v)$, we note that $c_{(n,v)}(0,v')=(0,A^nv')$, and from
$$(0,\lambda'Av)=\lambda(0,Av)=\lambda c_{(1,0)}(0,v)=c_{(\pm1,v_0)}(0,\lambda'v)=(0,B^{\pm1}\lambda'v)$$
we conclude that $A$ and $B^{\pm1}$ are conjugated by $\lambda'$.
\end{proof}

We look forward to investigate Anosov diffeomorphism by using stack techniques, building over the new theory of metrics on differentiable stack, currently in development \cite{dhf2}.


\subsection*{Periodic dynamics and orbifolds}

We consider now dynamics $f:M\to M$ that are periodic. More generally, let $G\action M$ be an action of a discrete group over a connected manifold such that the induced morphism $\rho:G\to \diff(M)$ has finite image. Writing $K$ for the kernel of $\rho$, we have that $K\action M$ is trivial, and we gain a short exact sequence of Lie groupoids.
$$1\to K\ltimes M \to G\ltimes M \to G/K\ltimes M \to 1$$

The quotient stack of the reduced action $G/K\ltimes M$ is an {\bf orbifold}. As any orbifold, it can be locally modeled by a linear action of a finite group on an Euclidean space. In fact, around $x\in M$, we can locally define a Morita equivalence between $G/K\ltimes M$ and the action groupoid of the tangent representation $(G/K)_x\ltimes T_xM$.
This follows for instance from the linearization theorem for proper Lie groupoids (see eg. \cite{dh} and references therein).

\begin{example}
Given $M=S^2$ and $f_p:S^2\to S^2$ a rotation of period $p$ around the $z$ axis, all the coarse orbit spaces $S^2/f_p$ are homeomorphic to $S^2$, but the quotient stacks $S^2//f_p$ are all different, for if $(S^2//f_p)'$ is the corresponding orbifold, then its isotropy at the poles is exactly $\Z_p$. The isotropy at the poles keeps track of the size of the orbits near them.
\end{example}

The orbifolds arising this way are {\em good orbifolds} in the sense of Thurston \cite{t}: their universal cover is a manifold. A paradigmatic example of a bad orbifold, that cannot be obtained as the stack underlying an action of a discrete group on a connected manifold, is the {\em teardrop}, whose southern hemisphere equals that of the sphere $S^2$, but whose northern hemisphere is a cone modeled by the action $\Z_2\action D^2$, $\bar 1\cdot x=-x$.

In the next subsection we will see that it may happen that two essentially finite actions may have isomorphic orbifolds and yet not be isomorphic as stacks. In some sense, what is happening is that the sequence of stacks
$$M//K \to M// G \to M//(G/K)$$
can be a non-trivial extension or fibration.

\subsection*{Construction of lens spaces}

\def\C{{\mathbb C}}

This example is related to the theory of lens spaces $L(p,q)$, which are quotients of of the three-sphere $S^3\subset\C^2$. Given $p,q$ coprime integers, we write $a=\exp(2\pi i/p)$ and consider the diffeomorphism
$$f_{p,q}:S^3\to S^3 \qquad f(z,w)=(az,a^qw)$$
Then $f_{p,q}$ yields a $\Z$-action which has no fixed points and has period $p$. The orbit space $L(p,q)=S^3/f_{p,q}$ is called a lens space, it is a manifold with fundamental group $\Z_p$, and the quotient map $S^3\to L(p,q)$ is the universal cover.

Given a map $f$ between lens spaces, its {\bf degree} 
$\deg f$ is defined as the degree of a lift $\tilde f:S^3\to S^3$. This is well-defined up to the actions and, therefore, it is well-defined up to homotopy, for each action is by maps isotopic to the identity.

Deep results on algebraic topology, using simple homotopy theory or other sofisticated tools, show the following facts:

\begin{itemize}
\item $L(p,q)$ and $L(p,q')$ are homotopy equivalent if and only if $qq'=\pm x^2$ has solutions mod $p$;
\item $L(p,q)$ and $L(p,q')$ are homeomorphic if and only if $q'=\pm q^{\pm 1}$ mod $p$;
\end{itemize}
Moreover, when $L(p,q)$ and $L(p,q')$ are homeomorphic, we have very explicit homeomorphisms relating them:
$$\tau:L(p,q)\to L(p,q^{-1}) \quad \tau[z,w]=[w,z]$$
$$\sigma:L(p,q)\to L(p,-q) \quad \sigma[z,w]=[z,\bar w]$$
from where it follows that they are also diffeomorphic, and moreover isometric for the obvious Riemannian metric. 

Next we will show that the orbit stacks can catch even more information than the isotropy groups and the orbit space.

\begin{proposition}
The orbit stacks $S^3//f_{p,q} \to S^3//f_{p,q'}$ are isomorphic if and only if $q'=\pm q$.
\end{proposition}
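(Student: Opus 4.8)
The plan is to reduce the statement, via Corollary~\ref{cor:1}, to a conjugacy question for $f_{p,q}$ inside $\diff(S^3)$, to settle the ``if'' direction by writing down two explicit diffeomorphisms, and to obtain the ``only if'' direction from the classical behaviour of the linking form of lens spaces under diffeomorphisms. Since $S^3$ is simply connected its universal cover is itself, so the lifted group of the $\Z$-action generated by $f_{p,q}$ is just $\Z$ acting through $f_{p,q}$, and likewise for $q'$; hence Corollary~\ref{cor:1} says that $S^3//f_{p,q}\simeq S^3//f_{p,q'}$ if and only if there is a diffeomorphism $\varphi\colon S^3\to S^3$ with $\varphi f_{p,q}\varphi^{-1}=f_{p,q'}^{\pm1}$. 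I would then remove the exponent $\pm1$: a direct computation shows that $\sigma(z,w)=(\bar z,\bar w)$ defines a diffeomorphism of $S^3$ with $\sigma f_{p,r}\sigma^{-1}=f_{p,r}^{-1}$ for every $r$ coprime to $p$, so after replacing $\varphi$ by $\sigma\varphi$ if necessary the criterion becomes simply that $f_{p,q}$ and $f_{p,q'}$ be conjugate in $\diff(S^3)$. For the ``if'' direction, $q'=q$ is realized by $\varphi=\id$ and $q'=-q$ (again coprime to $p$) by $\varphi(z,w)=(z,\bar w)$, for which $\varphi f_{p,q}\varphi^{-1}(z,w)=(az,a^{-q}w)=f_{p,-q}(z,w)$.

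For the converse, assume $\varphi f_{p,q}\varphi^{-1}=f_{p,q'}$ with $\varphi\in\diff(S^3)$. Then $\varphi$ conjugates the free $\Z/p$-action $\langle f_{p,q}\rangle$ onto $\langle f_{p,q'}\rangle$ sending the chosen generator $f_{p,q}$ to $f_{p,q'}$, and therefore descends to a diffeomorphism $\bar\varphi\colon L(p,q)\to L(p,q')$ which carries the canonical generator of $\pi_1=H_1=\Z/p$ to the canonical generator, with $\deg\bar\varphi=\deg\varphi=\pm1$. Functoriality of the torsion linking form under a diffeomorphism then gives $\mathrm{lk}_{L(p,q)}(g,g)=\pm\,\mathrm{lk}_{L(p,q')}(g,g)$ in $\mathbb{Q}/\mathbb{Z}$ (the sign being $\deg\bar\varphi=\pm1$), where $g$ denotes the canonical generator on either side; since in any fixed convention $\mathrm{lk}_{L(p,q)}(g,g)$ is the class of $r(q)/p$ for a fixed function $r$ valued in $\{\pm q,\pm q^{-1}\}$, this forces $q'\equiv\pm q\pmod p$. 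Equivalently, one may quote the classical fact that a degree-$d$ map $L(p,q)\to L(p,q')$ inducing multiplication by $k$ on $\pi_1$ can only exist when $qd\equiv q'k^{2}\pmod p$, applied here with $k=1$ and $d=\pm1$.

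The only genuine input is this last step, and the point requiring care is that it uses the refinement of the lens space classification which tracks the induced map on $\pi_1$, not the bare homeomorphism classification $q'\equiv\pm q^{\pm1}\pmod p$; the latter would spuriously allow $q'=q^{-1}$. What rules that out is exactly that a stack isomorphism conjugates the deck \emph{transformations} and not merely the deck \emph{groups}, which is what pins $\bar\varphi$ to send canonical generator to canonical generator; together with the linking form this yields $q'=\pm q$, and this is the announced extra information carried by the orbit stack $S^3//f_{p,q}$ beyond the orbit space $L(p,q)$ and its isotropy.
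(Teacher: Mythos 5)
Your proof is correct, and the overall strategy coincides with the paper's: reduce via Corollary \ref{cor:1} (and simple connectedness of $S^3$) to the statement that $f_{p,q}$ is conjugate in $\diff(S^3)$ to $f_{p,q'}^{\pm 1}$, exhibit explicit conjugations for the ``if'' direction, and invoke classical lens-space topology for the ``only if''. The one place where you genuinely diverge is in how that classical input enters. The paper first quotes the homeomorphism classification to reduce to $q'\equiv\pm q^{\pm1}$, then rules out the case $q'=q^{-1}$ by composing with $\tau[z,w]=[w,z]$ to produce a \emph{self}-map of $L(p,q)$ of degree $\pm1$ inducing multiplication by $q$ on $\pi_1$, and concludes $q^2\equiv\pm1$ from torsion theory. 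You instead apply the degree/$\pi_1$/linking-form constraint (Olum's $qd\equiv q'k^2 \bmod p$) directly to the descended diffeomorphism $\bar\varphi\colon L(p,q)\to L(p,q')$, using the key observation that the conjugation pins down the induced map on $\pi_1$ to be generator-to-generator ($k=1$), which immediately yields $q'\equiv\pm q$. This is slightly more economical: it needs only one classical input rather than two, and it makes transparent exactly where the stack sees more than the manifold, namely in forcing $k=1$. Two small remarks: your hedge that the linking-form convention is ``a fixed function $r$ valued in $\{\pm q,\pm q^{-1}\}$'' does the job, since each of the four conventions, applied uniformly to both sides, yields $q'\equiv\pm q$; and your normalization step via $\sigma(z,w)=(\bar z,\bar w)$, which conjugates every $f_{p,r}$ to its inverse, is a clean way to dispose of the exponent $\pm1$ once and for all (the paper instead realizes $f_{p,q}\sim f_{p,-q}$ by $(z,w)\mapsto(z,\bar w)$ and keeps the $\pm1$ in the bookkeeping).
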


\begin{proof}
Since $S^3$ is simply connected, in light of our main theorems, the stacks are isomorphic if and only if $f_{p,q}$ is conjugated to $f_{p,q'}^ {\pm 1}$. We can directly check that $f_{p,q}$ and $f_{p,-q}$ are conjugated by $\sigma$. For the converse, since isomoprhic stacks must have homeomorphic orbit spaces, we can already assume that $q'=\pm q^ {\pm 1}$ mod $p$. 
Assuming the existence of an isomorphism of stacks $S^3//f_{p,q} \to S^3//f_{p,q^{-1}}$, we will show that $q^{-1}=\pm q$, concluding the proof. 

The diffeomorphisms $f_{p,q}$ and $f_{p,q^{-1}}^q$ are conjugated by $\tau$. From the stack isomorphism we have that $f_{p,q^{-1}}$ and $f_{p,q}^{\pm 1}$ are conjugated. By transitivity, it follows that $f_{p,q}$ must be conjugated to $f_{p,q}^{\pm q}$. If $\theta:S^3\to S^3$ is such a conjugation, then $\theta$ must have degree $\pm 1$ since it is a diffeomorphism, and $\theta_*:\pi_1(L)\to\pi_1(L)$ must be multiplication by $q$. It follows from the theory of torsion that $q^2=\pm 1$ mod $p$, and therefore $q=\pm q^ {-1}$ mod $p$. 
\end{proof}

\begin{example}
We consider the six maps $f_{7,q}$, $1\leq q<7$.
All of them lead to the same quotient homotopy type, namely $L(7,1)$, $L(7,2)$, $L(7,3)$, $L(7,4)$, $L(7,5)$ and $L(7,6)$ are homotopy equivalent. If we look at the quotient manifolds, then we find two homeomorphism and diffeomorphism types, namely that of $L(7,1)$ and $L(7,6)$, and that of $L(7,2),L(7,3),L(7,4)$ and $L(7,5)$. If, instead of looking at the quotient as a manifold, we look at the quotient stacks $\mathbb{L}(7,q)=S^3//f_{7,q}$, then we find three different stacks since  $\mathbb{L}(7,3)\simeq \mathbb{L}(7,4) \not\simeq \mathbb{L}(7,2)\simeq \mathbb{L}(7,5)$ by the previous proposition.
\end{example}

We close by rising the question of whether it is possible to achieve the classification of lens spaces by using stack techniques, and avoiding simple homotopy theory or other sophisticated tools.

\end{document}